\documentclass{amsart}
\usepackage[dvipsnames]{xcolor}
\usepackage{hyperref}
\hypersetup{
     colorlinks=true,
     linkcolor=blue!60!black,
     filecolor=blue!75!black,
     citecolor = green!50!black,      
     urlcolor=magenta,
     }
\usepackage{ stmaryrd }
\usepackage{amssymb}
\usepackage{ bbold }
\usepackage[all]{xy}
\xyoption{all}
\usepackage{url}
\usepackage{mathrsfs}
\usepackage{tikz-cd}
\usepackage{eucal}
\usepackage{comment}
\usepackage{mathtools}

\usepackage[nameinlink,capitalise,noabbrev]{cleveref}

\newcommand{\Hom}{\operatorname{Hom}\nolimits}

\def \D{{\mathcal D}}
\def \T{{\mathcal T}}
\def \S{{\mathcal S}}

\DeclareMathOperator{\h}{h}

\DeclareMathOperator{\Mod}{Mod}
\DeclareMathOperator{\mmod}{mod}

\usepackage{todonotes}

\newcommand{\mmmod}{\mmod\kern-0.1em\text{-}}%     
\newcommand{\MMod}{\Mod\kern-0.1em\text{-}}%  

\numberwithin{equation}{section}% makes equat numb contain the section
\newtheorem{theorem}[equation]{Theorem}
\newtheorem{proposition}[equation]{Proposition}
\newtheorem{corollary}[equation]{Corollary}
\newtheorem{lemma}[equation]{Lemma}

\theoremstyle{remark}
\newtheorem{definition}[equation]{Definition}

\newtheorem{remark}[equation]{Remark}
\newtheorem{recollection}[equation]{Recollection}
\newtheorem{example}[equation]{Example}

\newtheorem{notation}[equation]{Notation}

\newtheorem{convention}[equation]{Convention}

\newtheorem{question}[equation]{Question}

\keywords{Purity, conservativity, Chouinard's theorem, discrete $p$-toral groups}
\subjclass[2020]{55R35; 18G80, 18F99, 20C20.}
\thanks{}
\title{Conservative geometric functors via purity}

\begin{document}

\author[]{Natàlia Castellana}
\author[]{Juan Omar G\'omez}

\address{Natàlia Castellana, Departament de Matemàtiques, Universitat Autònoma de Barcelona, 08193 Bellaterra, Spain; Centre de Recerca Matemàtica, Barcelona, Spain}
\email{Natalia.Castellana@uab.cat}
\urladdr{https://mat.uab.cat/~natalia}

\address{Juan Omar G\'omez, Fakultat f\"ur Mathematik, Universit\"at Bielefeld, D-33501 Bielefeld, Germany}
\email{jgomez@math.uni-bielefeld.de}
\urladdr{https://sites.google.com/cimat.mx/juanomargomez/home}

\begin{abstract}
 We establish a criterion for determining when a family of geometric functors is jointly conservative through the lens of purity in compactly generated triangulated categories. We introduce the notion of pure descendability and we apply it to two particular situations involving sequential limits of ring spectra.  
\end{abstract}

\maketitle

\tableofcontents

\setcounter{tocdepth}{1}
\setlength{\parindent}{0cm}
\setlength{\parskip}{0.8ex}

\section{Introduction}

A tensor-triangulated (tt) functor $f^\ast\colon \T\to \S$ between rigidly-compactly generated tt-categories is called \textit{geometric} if it preserves small coproducts. Such functors serve as auxiliary tools for understanding geometric properties of $\T$ via those of $\S$. In fact, geometric functors already possess notable structural features: they admit a chain of adjunctions $f^\ast\dashv f_\ast \dashv f^!$, and the pair $(f^\ast,f_\ast)$ satisfies the projection formula; see \cite{BDS16}. 

 A particularly desired property of a geometric functor is \textit{conservativity}. This property has strong consequences on Balmer spectra \cite{balmer2018surjectivity}.  In practice, however, one often encounters a family of geometric functors $(f^\ast_i\colon \T\to \S_i)_{i\in \mathcal{I}}$ where $\mathcal{I}$ is not necessarily finite, and one instead asks for \textit{joint conservativity} of the family which is still sufficient to descend substantial geometric information; see for instance \cite{BCHS} and \cite{BCHS24}.

Examples of jointly conservative families of geometric functors include the following:

 \begin{enumerate}
     \item Let $G$ be a finite group and $k$ be a field of positive characteristic $p$ dividing the order of $G$. Then the family of restriction functors between stable module categories
     \[
      \left(\mathrm{Res}^G_E \colon \mathrm{StMod}_{kG} \to \mathrm{StMod}_{kE} \right)_{E\in \mathcal{E}_p(G)}
     \]
     is jointly conservative, where $\mathcal{E}_p(G)$ is the collection of elementary abelian $p$-subgroups of $G$. In fact, this is a reformulation of a classical theorem of Chouinard \cite{Chouinard} in modular representation theory.
     
     \item Let $G$ be a compact Lie group and $k$ be a field of positive characteristic $p$.  Write $\Mod_{C^\ast(BG;k)}$ to denote the category of module spectra over the algebra of $k$-valued cochains on the classifying space ${C^\ast(BG;k)}$.  Then, the family of induction functors 
     \[
      \left(\mathrm{Ind}^E_G \colon \Mod_{C^\ast(BG;k)} \to \Mod_{C^\ast(BE;k)} \right)_{E\in \mathcal{E}_p(G)}
     \]
     is jointly conservative. As before, $\mathcal{E}_p(G)$ denotes the collection of elementary abelian $p$-subgroups of $G$ up to conjugacy. See \cite{BG}.
     
     \item Let $R$ be a commutative Noetherian ring. Then the family of derived base change functors  along residue fields  
     \[
     \left(-\otimes^L_Rk(\mathfrak p) \colon \mathbf{D}(R) \to \mathbf{D}(k(\mathfrak p)\right)_{\mathfrak p\in \mathrm{Spec}(R)}
     \]
     is jointly conservative; see for instance \cite{Nee92}. 
 \end{enumerate}

 We emphasize that in the previous examples the relevance of these families extends beyond tt-geometry: they also provide a more refined understanding of a category in terms of simpler ones. This naturally leads to the following question:

 \begin{question}\label{question-main}
 Under what conditions on a family of geometric functors $(f_i^\ast)_{i\in \mathcal I}$ can we ensure that they form a jointly conservative family? 
 \end{question}

This question has been studied in the context of \textit{descent in tt-geometry} and in particular it can be answered as a consequence of  (see for instance \cite{BCHS}, \cite{barthel2024homological} and \cite{Gom25}). Informally, weak descendability requires that the monoidal unit be reconstructed from the objects $(f_i)_\ast (\mathbb 1_{\mathcal S_i})$ as a localizing tensor ideal; see \cref{rec-weakly} for the precise definition. In fact, as explained in \textit{loc. cit.}, weak descendability has even stronger consequences.

In this paper, we address \cref{question-main} through the lens of purity in compactly generated triangulated categories, as developed in \cite{Kra00}. In particular, we introduce the notion of a \textit{pure descendable} family of geometric functors; see \cref{def-puredesc} for further details.

%%%%%%%%%%%%%%%%------------------------------------------------------------------------------------------------

\begin{definition}
    Let $(f_i^\ast\colon \T\to \S_i)_{i\in \mathcal I}$ be a family of geometric functors. For a collection of objects $\mathcal{X}$, we write  $\mathbf{pure}^\Delta_\Pi(\mathcal{X})$ as short for the pure-closure of the smallest thick subcategory of $\T$ containing the product-closure  of  $\mathcal{X}$.   We say that the family $(f_i^\ast)_{i\in \mathcal{I}}$ is \textit{pure descendable} if 
 $\mathbf{pure}^\Delta_\Pi\left(\bigcup_{i\in \mathcal I}(f_i)_\ast\S_i\right)$ is a tensor-ideal and  contains the monoidal unit.
\end{definition}

In fact, our choice of terminology is motivated by weak descendability, and is meant to highlight that we seek to reconstruct the monoidal unit using the purity in the category $\T$, rather than relying solely on its triangulated and monoidal structures.

Some of the motivation for this definition arises from contexts with a rich homological structure, which we aim to exploit in order to produce examples of pure descendable families.  Before elaborating this further, let us record the main consequence of this definition, which appears later as \cref{prop:pure-desendable}.  

\begin{theorem}
     If $(f_i^\ast\colon \T\to \S_i)_{i\in \mathcal I}$ is a pure descendable family of geometric functors, then it is jointly conservative.  
\end{theorem}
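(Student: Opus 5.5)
Suppose $t\in\T$ satisfies $f_i^\ast t=0$ for every $i\in\mathcal I$; we must show $t=0$. The idea is to study the subcategory
\[
\mathcal C_t:=\{x\in \T \mid x\otimes t=0\}
\]
and prove it is closed under every operation used to build $\mathbf{pure}^\Delta_\Pi\left(\bigcup_{i}(f_i)_\ast\S_i\right)$. Then the unit lies in $\mathcal C_t$, so $t\cong \mathbb 1\otimes t=0$. The tensor-ideal hypothesis will not be needed in this direction; the unit lying in the pure-closure is what matters.

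First, the generators lie in $\mathcal C_t$. For $s\in\S_i$ the projection formula gives $(f_i)_\ast s\otimes t\cong (f_i)_\ast(s\otimes f_i^\ast t)=0$. Next, $\mathcal C_t$ is thick, since $-\otimes t$ is exact and a retract of zero is zero. Products need more care, because $-\otimes t$ need not commute with products. I would therefore pass to a dual formulation. As $\T$ is compactly generated, $x\otimes t=0$ holds iff $\Hom(c, x\otimes t)=0$ for all compact $c$, and by rigidity $\Hom(c,x\otimes t)\cong\Hom(c\otimes t^\vee\dots)$, which is not directly usable for a non-compact $t$. Instead, I would work with the internal hom: set $\mathcal C'_t=\{x \mid \underline{\Hom}(t,x)=0\}$. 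This class is closed under products, thick closure and retracts, and it contains $(f_i)_\ast s$, because $\underline{\Hom}(t,(f_i)_\ast s)\cong (f_i)_\ast\underline{\Hom}(f_i^\ast t,s)=0$ by the internal adjunction for geometric functors \cite{BDS16}.

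It remains to handle pure closure. Recall that the pure-closure of a class adds objects $y$ admitting a pure monomorphism $y\to x$ (equivalently, pure subobjects, quotients and extensions as in \cite{Kra00}). The key claim is that $\mathcal C'_t$, or a suitable variant of it, is closed under pure subobjects. I would prove this using the restricted Yoneda functor $\h\colon\T\to\MMod\T^c$: a map $y\to x$ is a pure monomorphism iff $\h(y)\to\h(x)$ is a monomorphism. For a compact $c$, the functor $\Hom(c,\underline{\Hom}(t,-))=\Hom(c\otimes t,-)$ is not determined by $\h$ when $t$ is not compact. This suggests switching test objects. Write $t$ as built from compacts and use the tensor formulation $\Hom(c,x\otimes t)$, which is a filtered colimit of $\Hom(c,x\otimes d)$ over compact $d$ mapping to $t$, as in Lazard-type presentations. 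Since $x\mapsto x\otimes d$ preserves pure monomorphisms (via $\h(x\otimes d)=\h(x)(d^\vee\otimes-)$), and filtered colimits are exact, pure monos $y\to x$ yield monos $\h(y\otimes t)\to\h(x\otimes t)$. So the homology condition $\h(x\otimes t)=0$ descends to pure subobjects, and it also passes to pure quotients and extensions by exactness of the induced sequences of modules. Hence $\mathcal C_t=\{x\mid \h(x\otimes t)=0\}$ is closed under pure operations and thick closure, and contains the generators.

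This leaves the main obstacle: closure of $\mathcal C_t$ under products, where $-\otimes t$ fails to commute with products. My first attempt would be to show that, for $x=\prod x_j$ with $x_j\otimes t=0$, the object $x\otimes t$ is phantom-trivial. Since $t\otimes\prod x_j\to\prod (t\otimes x_j)$ is an isomorphism when $t$ is compact, passing through the filtered colimit $t=\mathrm{colim}\,d$ at the level of $\h$ reduces the question to a Mittag-Leffler-free statement about the modules $\h(d\otimes\prod x_j)=\prod \h(d\otimes x_j)$. If this fails, the fallback is to alternate: use $\mathcal C'_t$, which is closed under products, and prove separately that it is closed under pure subobjects by realising pure monomorphisms as filtered colimits of split monomorphisms, checking compatibility with $\underline{\Hom}(t,-)$ on $\h$.
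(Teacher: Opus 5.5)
Your proposal has a genuine gap, and it starts with the claim that the tensor-ideal hypothesis is not needed. It is needed: $\mathbb 1\in\mathbf{pure}^\Delta_\Pi(\bigcup_i(f_i)_\ast\S_i)$ alone does not imply joint conservativity. Take $\T=\mathbf D(\mathbb Z_p)$ and $f_n^\ast=-\otimes^L_{\mathbb Z_p}\mathbb Z/p^n$. By \cref{ex-completelocal}, $\mathbb Z_p$ is a retract of $\prod_n\mathbb Z/p^n$, so the unit lies even in $\mathrm{Thick}(\mathbf{prod}(\bigcup_n(f_n)_\ast\S_n))$. Yet $\mathbb Q_p\otimes^L_{\mathbb Z_p}\mathbb Z/p^n=0$ for every $n$. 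So in general there is no class that contains the generators, is closed under thick closure, products and pure subobjects, and satisfies ``$\mathbb 1$ in the class $\Rightarrow t=0$''.

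Both of your candidates break at exactly this point. For $\mathcal C_t$, the product issue is a genuine failure, not a technical obstacle. $\mathcal C_{\mathbb Q_p}$ contains every $\mathbb Z/p^n$ but not $\prod_n\mathbb Z/p^n$, since that product has $\mathbb Z_p$ as a summand and $\mathbb Z_p\otimes\mathbb Q_p\neq 0$; no Mittag-Leffler argument can fix this.

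For $\mathcal C'_t$, the unit lying in it only gives $\underline{\Hom}(t,\mathbb 1)=0$. This does not force $t=0$ for non-dualizable $t$: indeed $\mathrm{RHom}_{\mathbb Z_p}(\mathbb Q_p,\mathbb Z_p)=0$. Moreover, the closure under pure subobjects you planned to prove is false in general. In $\mathbf D(\mathbb Z)$, the map $\mathbb Z\to\widehat{\mathbb Z}=\prod_p\mathbb Z_p$ is a pure monomorphism and $\mathrm{RHom}(\mathbb Q,\widehat{\mathbb Z})=0$, but $\mathrm{Ext}^1(\mathbb Q,\mathbb Z)\neq 0$. Filtered-colimit arguments cannot help here, because $\underline{\Hom}(t,-)$ does not commute with filtered colimits when $t$ is not compact.

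The missing idea is to use the full condition $\mathbf{pure}^\Delta_\Pi(\bigcup_i(f_i)_\ast\S_i)=\T$, which is exactly what the tensor-ideal condition provides. You then test this condition only on pure-injective objects, where pure monomorphisms split, so closure under pure subobjects is never needed.

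Your class $\mathcal C'_t$, or the paper's $(\bigcap_i\ker f_i^\ast)^\perp$, is the right one. As you showed, it is thick, closed under products, and contains each $(f_i)_\ast s$ by the internal adjunction. Hence it contains $\mathrm{Thick}(\mathbf{prod}(\bigcup_i(f_i)_\ast\S_i))$.

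Now let $x$ be an indecomposable pure-injective object. By hypothesis, $x$ admits a pure monomorphism into some $y$ in that thick subcategory. The monomorphism splits, so $x$ is a retract of $y$, hence $x\in\mathcal C'_t$ and $\T(t,x)=0$. Then \cref{lemma-krause 1.10} gives $t=0$. Alternatively, apply the same argument to a pure monomorphism from $t$ into a pure-injective object, which is nonzero whenever $t\neq 0$. This is the paper's proof.
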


With this result in place, one might now wonder how to verify the conditions appearing in the definition of a pure descendable family 
$(f_i^\ast)_{i\in \mathcal{I}}$.  We next provide a criterion for determining when the monoidal unit lies in $\mathbf{pure}^\Delta_\Pi\left(\bigcup_{i\in \mathcal I}(f_i)_\ast\S_i\right)$ in the setting of module categories over sequential limits of ring spectra. This criterion will play a crucial role in establishing our main examples of pure descendable families. The following result corresponds to \cref{coro-puremono-seqlimit}.

\begin{theorem}\label{thm-seqlim}
       Let $k$ be a commutative Artinian ring. Let $R$ be a commutative ring spectrum and $\{R_n\}_{n\in \mathbb N}$ be a directed diagram of $k$-algebra spectra. Suppose that 
   \[
   R\simeq \lim_{n\in \mathbb N } R_n
   \]
   in $\mathrm{CAlg}$, and that  $\pi_\ast R_n$ finitely-generated over $k$. Then the canonical map
   \[
   R\to \prod_{n\in \mathbb N} R_n 
   \]
   is a pure monomorphism in $\Mod_R$.  
\end{theorem}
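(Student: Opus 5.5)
Since $R\simeq\lim_n R_n$, the Milnor sequence gives an exact triangle in $\Mod_R$
\[
R\longrightarrow \prod_{n} R_n \xrightarrow{\;1-\mathrm{sh}\;} \prod_n R_n \longrightarrow \Sigma R .
\]
Recall that a map $X\to Y$ in a compactly generated triangulated category is a pure monomorphism if and only if it becomes a monomorphism after applying the restricted Yoneda functor $\h$. Equivalently, the next map in the triangle, $\prod R_n\to \Sigma R$, is phantom. The plan is therefore to show that the connecting map $\delta\colon \prod_n R_n\to \Sigma R$ is phantom. This is the same as asking that $1-\mathrm{sh}$ is surjective on $[C,\prod_n R_n]_\ast=\prod_n[C,R_n]_\ast$ for every compact $R$-module $C$. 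Put differently, we need $\lim^1_n [C,R_n]_\ast=0$ for all compact $C$.

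The key step is a Mittag-Leffler argument. First take $C=\Sigma^m R$. Then $[C,R_n]=\pi_{-m}R_n$, and this is a finitely generated module over the Artinian ring $k$, hence of finite length. Any inverse system of finite-length $k$-modules is Mittag-Leffler, because the descending chain of images stabilizes. Hence $\lim^1=0$.

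To pass to an arbitrary compact $C$, I would use that $C$ lies in $\mathrm{thick}(R)$. I would then show that the class of $C$ for which $\{[C,R_n]_\ast\}_n$ consists of finite-length $k$-modules is thick. This class is closed under suspensions and retracts. It is also closed under cofibres, because the long exact sequence puts $[C,R_n]_\ast$ between finite-length modules, and an extension of a submodule of a finite-length module by a quotient of one has finite length. One subtlety is that $[C,R_n]$ is a $k$-module only because $R_n$ is a $k$-algebra; the $k$-action comes through $R_n$. The connecting maps in the long exact sequence are $k$-linear, so the argument goes through. It then follows that $\lim^1_n[C,R_n]_\ast=0$ for every compact $C$. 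By the Milnor sequence for the product, $\delta$ induces zero on $[C,-]$, so $\delta$ is phantom and $R\to\prod R_n$ is a pure monomorphism.

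I expect the main obstacle to be the setup rather than the algebra. One has to check that the limit in $\mathrm{CAlg}$ agrees with the limit computed in $\Mod_R$, so that the Milnor triangle exists in $\Mod_R$ with the correct maps. One also has to check that the transition maps $[C,R_{n+1}]\to[C,R_n]$ are $k$-linear. The latter holds because the system is a directed diagram of $k$-algebras. The limit issue holds because limits in $\mathrm{CAlg}$ are computed underlying, and each $R_n$ is an $R$-module via the projection maps.
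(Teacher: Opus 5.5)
Your proof is correct and follows the paper's skeleton. The paper proves the general \cref{prop:pure-mono in Artinian} and specialises it to $\T=\Mod_R$, $F(n)=\Mod_{R_n}$: realise $R$ as the fibre of $1-\mathrm{sh}$ on $\prod_n R_n$, reduce pure-monicity to $\lim^1_n[C,R_n]_\ast=0$ for compact $C$, and get this from Mittag-Leffler for finite-length $k$-modules.

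The one real difference is the finiteness step for an arbitrary compact $C$.
\begin{itemize}
\item The paper rewrites $[C,R_n]_\ast\cong\Hom^\ast_{\Mod_{R_n}}(R_n\otimes_R C,R_n)$ and invokes End-finiteness of $\Mod_{R_n}$ (via \cref{rec:Monogenic end-finite}, i.e.\ Noetherianity of $\pi_\ast R_n$). So these groups are finitely generated over $\pi_\ast R_n$, hence over $k$. The $\pi_\ast R_n$-module structure also gives the $k$-linearity that you check by hand. This route is what lets the general proposition handle categories $\T$ not generated by their unit.
\item Your thick-subcategory induction from $C=R$ instead uses $\Mod_R^c=\mathrm{thick}(R)$. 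It needs no Noetherianity at all, only that each $\pi_mR_n$ has finite length. For example, for $R_n=C^\ast(BG_n;\mathbb F_p)$ it needs only that each $H^m(BG_n;\mathbb F_p)$ is finite, not the Evens--Venkov theorem.
\end{itemize}
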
 

Let us make two remarks regarding the previous result. First, one can generalize this statement to more general rigidly-compactly generated tt-categories; see \cref{prop:pure-mono in Artinian}. Second, there is work dedicated to determining when a limit of pure-injective modules remains pure-injective. In such cases, one may conclude that the ring $R$ itself is a pure-injective object, which in turn implies that $R$ is a retract of $\prod_n R_n$. This can be used, for example, to recover the fact that the ring of $p$-adic integers is a retract of $\prod_{n\in \mathbb N}  \mathbb{Z}/p^n\mathbb Z$; see  \cref{ex-completelocal}.

The original motivation for this project was to establish a Chouinard-type theorem for the category of module spectra over the cochain algebra $C^\ast(BG;\mathbb F_p)$, where $G$ is a locally finite Artinian $p$-group (see \cite[Section 4.3]{BCHV}). Such a general result would lead to a BIK-stratification theorem for modules over cochains on $p$-local compact groups.

In fact, for a locally finite Artinian group $G$, the ring $C^\ast(BG;\mathbb F_p)$ falls into the framework of \cref{thm-seqlim}. This is a key ingredient in establishing our main example of a pure descendable family of geometric functors, stated in \cref{thm-example of pure-des}.

\begin{theorem}
    Let $G$ be locally finite Artinian group expressed as $G=\bigcup_{n\in \mathbb{N}} G_n$ for an ascending chain of finite subgroups $\cdots \subset G_n\subset G_{n+1}\subset \cdots$ of $G$. Then the family of geometric functors 
    \[
    \left(\mathrm{Ind}_G^{G_n}\colon \Mod_{C^\ast(BG;\mathbb F_p)} \to \Mod_{C^\ast(BG_n;\mathbb F_p)} \right)_{n\in \mathbb N}
    \]
    is pure descendable.
\end{theorem}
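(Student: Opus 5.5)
We have to check the two conditions in the definition of pure descendability for the family $f_n^\ast=\mathrm{Ind}_G^{G_n}$. Write $R=C^\ast(BG;\mathbb F_p)$ and $R_n=C^\ast(BG_n;\mathbb F_p)$, and set
\[
\mathcal P=\mathbf{pure}^\Delta_\Pi\Big(\textstyle\bigcup_n (f_n)_\ast\Mod_{R_n}\Big).
\]
The pushforward $(f_n)_\ast$ is restriction of scalars along $R\to R_n$. In particular $(f_n)_\ast(R_n)=R_n$ viewed as an $R$-module.

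\textbf{Unit.} First I would identify $BG$ with the homotopy colimit of the $BG_n$. Since $G=\bigcup_n G_n$ is an ascending union, $BG\simeq \mathrm{hocolim}_n BG_n$. Applying $C^\ast(-;\mathbb F_p)$, which sends homotopy colimits of spaces to limits in $\mathrm{CAlg}$, gives $R\simeq\lim_n R_n$ in $\mathrm{CAlg}$. Each $G_n$ is finite, so $H^\ast(BG_n;\mathbb F_p)$ is finitely generated as an algebra. It is not finitely generated as a module over $k=\mathbb F_p$, however. This is the main obstacle: \cref{thm-seqlim} as stated requires $\pi_\ast R_n$ to be finitely generated over an Artinian $k$, which fails here.

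I would try two routes around this.
\begin{enumerate}
\item Read the hypothesis degreewise, i.e.\ each $\pi_iR_n$ finite. Then invoke the general version \cref{prop:pure-mono in Artinian}. Its proof presumably runs a Mittag-Leffler argument on $\varprojlim^1$ of finite groups $\Hom(C,R_n)$ for compact $C$. Degreewise finiteness of $\pi_\ast R_n$ is exactly what makes these groups finite, so the argument should go through.
\item Otherwise, replace each $R_n$ by the Postnikov truncations $\tau_{\ge -m}R_n$. These have finite total homotopy, and $R$ is the limit of the resulting $\mathbb N\times\mathbb N$ diagram, which can be reindexed diagonally.
\end{enumerate}

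Either way we obtain that $R\to\prod_n R_n$ is a pure monomorphism in $\Mod_R$. Now $\prod_n R_n$ lies in the product-closure of the $(f_n)_\ast R_n$, and $\mathcal P$ is pure-closed. Since a pure subobject of an object of $\mathcal P$ lies in $\mathcal P$ (by the definition of pure-closure), this gives $\mathbb 1=R\in\mathcal P$.

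\textbf{Tensor ideal.} Next I would show that $\mathcal P$ is a tensor ideal, using the projection formula $M\otimes_R(f_n)_\ast N\simeq (f_n)_\ast(f_n^\ast M\otimes N)$. This shows that the generating set $\bigcup_n(f_n)_\ast\Mod_{R_n}$ is already a tensor ideal. The operations used to build $\mathcal P$ are products, thick closure and pure closure, so I would check that tensoring with a fixed $M$ preserves each:
\begin{enumerate}
\item Thick closure is preserved because $M\otimes-$ is exact.
\item For pure closure, $M\otimes-$ preserves pure monomorphisms, since pure triangles are filtered colimits of split ones after applying restricted Yoneda. This should be a lemma earlier in the paper.
\item Products are the problem, since $M\otimes\prod X_j\neq\prod (M\otimes X_j)$ in general. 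Here I would use that $\mathcal P$ is localizing-like for compact $M$: for compact $M$, tensoring commutes with products by rigidity. For general $M$, write $M$ as a filtered colimit of compacts in the pure sense, i.e.\ $M\otimes X$ is a pure quotient of a coproduct of compacts tensored with $X$. Coproducts are pure subobjects of products, so they stay in $\mathcal P$. Then $M\otimes X$ is a pure-epimorphic image, and via the associated pure triangle and thick closure it lies in $\mathcal P$.
\end{enumerate}
Thus the tensor-ideal condition is formal, and the real content is the pure monomorphism $R\to\prod_n R_n$.
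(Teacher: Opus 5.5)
Your treatment of the unit is correct and is essentially the paper's argument. \cref{prop:pure mono} reduces to the Mittag--Leffler argument in \cref{prop:pure-mono in Artinian}, which only needs $\Hom^\ast_{R}(C,R_n)$ to be finite in each degree for compact $C$. You are right that this, and not the literal hypothesis of \cref{coro-puremono-seqlimit}, is what is actually used; your Postnikov route is unnecessary.

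The genuine gap is in the tensor-ideal step, when you pass to non-compact $M$. There are two problems.

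First, you claim $\coprod_i c_i\otimes X\in\mathcal P$ because coproducts are pure subobjects of products. This presupposes that $\mathcal P$ is closed under products. A priori $\mathrm{Thick}(\mathbf{prod}(\bigcup_n(f_n)_\ast\Mod_{R_n}))$ is only closed under products of families of uniformly bounded cone length. This point can be patched: if $X\to Y$ is a pure monomorphism into the thick closure, every $c_i\otimes Y$ is built in the same number of steps as $Y$.

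The fatal step is the next one. $\mathcal P$ is closed under pure \emph{subobjects}, not pure quotients, and it is not thick. So knowing that $M\otimes X$ is a pure quotient of an object of $\mathcal P$ gives nothing. The pure triangle does not help either, because its fibre is $K\otimes X$ with $K$ an arbitrary object, which is exactly what you are trying to control. Closure under pure quotients is what definability would supply, and this is why \cref{prop:pure_Pi is tensor ideal} assumes it.

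In fact no formal argument of this kind can work. Yours uses only the projection formula and general purity, so it applies verbatim to $k[[t]]\simeq\lim_n k[t]/t^n$. There the unit lies in $\mathcal P$ by the same Mittag--Leffler argument (\cref{ex-completelocal}). You would conclude that $(-\otimes_{k[[t]]}k[t]/t^n)_n$ is pure descendable, hence jointly conservative by \cref{prop:pure-desendable}. But $k((t))\otimes_{k[[t]]}k[t]/t^n\simeq 0$ for every $n$.

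So the tensor-ideal property is where the group theory has to enter, and this is the missing idea. The paper replaces the infinite family $(G_n)$ by a finite one:
\begin{enumerate}
\item By \cref{rec: structure l.f. Artinian group}, $G$ has only finitely many conjugacy classes $\mathcal E$ of elementary abelian subgroups.
\item By \cref{rec-abelem}, each $C^\ast(BG_n;\mathbb F_p)$ lies in the thick subcategory generated by the $\mathrm{Res}^E_{G_n}C^\ast(BE;\mathbb F_p)$.
\item Conjugating into $\mathcal E$ identifies the thick closure of $\bigcup_n\mathrm{Res}^{G_n}_G\Mod_{C^\ast(BG_n;\mathbb F_p)}$ with that of the finite family $(\mathrm{Res}^E_G)_{E\in\mathcal E}$.
\item For a finite family the right adjoints assemble into a single product-preserving right adjoint. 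So $\mathbf{prod}$ is absorbed, and the projection formula gives a tensor ideal against arbitrary $M$; this is \cref{cor-pure=def}.
\end{enumerate}
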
 

As a consequence, we obtain the desired Chouinard-type theorem for locally finite Artinian groups; see \cref{coro: Chouinard for discrete p-toral}.

\begin{corollary}
     Let $G$ be locally finite Artinian group.   Let  
 $\mathcal{E}$ be a set of representatives of $G$-conjugacy classes of elementary abelian $p$-subgroups of $G$. Then  the geometric functor 
    \[
    \mathrm{Ind}_{\mathcal{E}}\colon \Mod_{C^*(BG;\mathbb{F}_p)}\rightarrow \prod_{E\in\mathcal{E}} \Mod_{C^*(BE;\mathbb{F}_p)}
    \]
    induced by the functors $\mathrm{Ind}_G^E\colon \Mod_{C^*(BG;
    \mathbb{F}_p)}\rightarrow \Mod_{C^*(BE;\mathbb{F}_p)}$, is conservative. 
\end{corollary}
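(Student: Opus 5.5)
The plan is to deduce the corollary from the pure descendability statement (\cref{thm-example of pure-des}) together with Chouinard's theorem for finite groups in its cochain form (Example (2) of the introduction, \cite{BG}). The argument is a two-step reduction: first from $G$ to the finite subgroups $G_n$, then from each $G_n$ to its elementary abelian $p$-subgroups.

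First, since $G$ is locally finite Artinian, I write $G=\bigcup_n G_n$ for an ascending chain of finite subgroups. By \cref{thm-example of pure-des}, the family $(\mathrm{Ind}_G^{G_n})_{n}$ is pure descendable. By \cref{prop:pure-desendable} it is therefore jointly conservative. Concretely, if $M\in\Mod_{C^*(BG;\mathbb F_p)}$ satisfies $\mathrm{Ind}_G^E M=0$ for all $E\in\mathcal E$, then it suffices to prove $\mathrm{Ind}_G^{G_n}M=0$ for every $n$.

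Second, fix $n$. For each elementary abelian $p$-subgroup $E'\le G_n$, induction is transitive: $\mathrm{Ind}_{G_n}^{E'}\circ\mathrm{Ind}_G^{G_n}\simeq \mathrm{Ind}_G^{E'}$, since both are base change along the composite ring map $C^*(BG)\to C^*(BG_n)\to C^*(BE')$. The subgroup $E'$ is also an elementary abelian $p$-subgroup of $G$, so it is $G$-conjugate to some $E\in\mathcal E$, say $E'=gEg^{-1}$. Conjugation by $g$ induces an equivalence $BE\simeq BE'$ compatible, up to homotopy, with the maps to $BG$, because inner automorphisms act trivially on $BG$ up to homotopy. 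Hence the ring maps $C^*(BG)\to C^*(BE')$ and $C^*(BG)\to C^*(BE)$ agree up to this identification, and $\mathrm{Ind}_G^{E'}M=0$ follows from $\mathrm{Ind}_G^{E}M=0$. Applying Chouinard's theorem for the finite group $G_n$ (the compact Lie case of \cite{BG}) to the module $\mathrm{Ind}_G^{G_n}M$ then gives $\mathrm{Ind}_G^{G_n}M=0$.

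Finally, conservativity of the product functor $\mathrm{Ind}_{\mathcal E}$ is equivalent to the statement that it detects zero objects, since it is an exact functor of triangulated categories (apply it to cones). Combining the two steps, $M=0$.

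The main obstacle is the conjugation step. One must check that the ring map $C^*(BG)\to C^*(BE')$ depends on $E'$ only up to conjugacy. This requires a homotopy $Bc_g\simeq \mathrm{id}$ on $BG$, which holds because $G$ is discrete. Everything else in the argument is formal.
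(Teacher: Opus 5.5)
Your proposal is correct and follows the paper's own argument. The paper combines \cref{thm-example of pure-des} (via \cref{prop:pure-desendable}) with Chouinard's theorem \cite[Theorem 3.1]{BG} for each finite $G_n$; your transitivity-of-induction and $G$-conjugation steps spell out details its one-line proof leaves implicit.
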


The paper is organized as follows. Section 2 contains preliminaries on tensor-triangulated geometry. In Section 3 we discuss purity in tensor-triangulated categories. In Section 4 we introduce the notion of pure descendable families of functors. Sections 5 and 6 are devoted to applications: sequential limits of ring spectra and geometric cochains of locally finite Artinian groups.

\subsection*{Notation and Conventions} 
Throughout this work, we freely use notation and terminology from \cite{BDS16} and \cite{BCHV}, to which we refer for any unexplained concepts. We often assume that our triangulated categories arise from stable $\infty$-categories. When the context is clear, we use `tt-category' as shorthand for  `symmetric monoidal stable $\infty$-category'. 

Now, let  $\T$ be a rigidly-compactly generated symmetric monoidal stable $\infty$-category. Then we write: 
\begin{enumerate}
    \item $\Hom_\T(-,-)$ to denote the mapping space in $\T$; 
     \item $\Hom^\ast_\T(-,-)=\Hom_\T(\Sigma^*-,-)$ to denote the graded homomorphisms in the homotopy category of $\T$;
    \item $\T(-,-)$ the group of homomorphisms in the homotopy category of $\T$.
\end{enumerate}

\subsection*{Acknowledgements}
We are deeply grateful to Isaac Bird for answering numerous questions about purity. We also thank Sergio Pavon and Mike Prest for their helpful comments, which have led us to improve our definition of \textit{pure descendability}, Wassilij Gnedin for providing relevant references, and Drew Heard for comments on an early version of this work.   

NC would like to thank the Isaac Newton Institute for Mathematical Sciences, Cambridge, for support and hospitality during the programme Equivariant homotopy theory in context where work on this paper was undertaken. This work was supported by EPSRC grant no EP/R014604/1. NC is partially supported by Spanish State Research Agency project PID2024-158573NB-I00, the Severo Ochoa and María de Maeztu Program for Centers and Units of Excellence in R$\&$D (CEX2020-001084-M), and the CERCA Programme/Generalitat de Catalunya. JOG is supported by the Deutsche Forschungsgemeinschaft (Project-ID 491392403 – TRR 358).

\section{Preliminaries}

In this section, we recall the relevant terminology to be used throughout the paper. We refer the reader to \cite{BDS16} for further details.

\begin{definition}
    A \textit{rigidly-compactly generated} tensor-triangulated category $\T$ is a triple $(\T,\otimes, \mathbb 1)$ where $\T$ is compactly generated and $(\otimes, \mathbb 1)$ is a symmetric monoidal structure on $\T$ satisfying that $\otimes$ is coproduct preserving in each variable, the monoidal unit $\mathbb 1$ is a compact object, and any compact object is dualizable.  
\end{definition}

\begin{remark}
    A rigidly-compactly generated tt-category is automatically closed monoidal.
\end{remark}

\begin{remark}
   In the literature, rigidly-compactly generated tt-categories are also referred to as \textit{big} tt-categories. We do not adopt this terminology in the present paper. 
\end{remark}

\begin{definition}
    A strongly monoidal triangulated functor $f^\ast\colon \T\to \S$ between rigidly-compactly generated tt-categories is a \textit{geometric functor} if it is coproduct preserving. 
\end{definition}

As mentioned in the introduction, geometric functors enjoy notable structural properties.

\begin{recollection}
    Let $f^\ast \colon \T\to \S$ a geometric functor. Then there is a chain of adjuntions  
    \[
    f^\ast\dashv f_\ast \dashv f^!
    \]
    and the pair $(f^\ast,f_\ast)$ satisfies the \textit{projection formula}: for any $x$ in $\T$ and any $y$ in $\S$ there is natural isomorphism 
    \[
    f_\ast (f^\ast (x)\otimes y)\simeq x\otimes f_\ast (y). 
    \]
\end{recollection}

\begin{definition}
    Let $\mathcal{I}$ be a set and let $(f_i\colon \T\to \S_i){i\in \mathcal{I}}$ be a family of triangulated functors between arbitrary triangulated categories. We say that the family $(f_i){i\in \mathcal{I}}$ is \textit{jointly conservative} if, for any object $x\in \T$, one has $x\simeq 0$ if and only if $f_i(x)\simeq 0$ for all $i\in \mathcal{I}$.
\end{definition}

\begin{remark}
   In general, a functor is said to be \textit{conservative} if it reflects isomorphisms. In the triangulated setting, this notion agrees with the above definition of conservativity. 
\end{remark}

%\part{Pure monomorphisms and conservative functors}
\section{Purity in triangulated categories}

In this section, we recollect some terminology and basic results on definable categories and purity in compactly generated triangulated categories that will be used later in this document. Our main references are \cite{BW23} and \cite{Kra00}.  

\begin{recollection}\label{basic res of purity}
    Let $\T$ be a compactly generated triangulated category. We write $\T^c$ to denote the full subcategory of $\T$ on compact objects. Let $\MMod\T^c$ denote the abelian category of (right) $\T^c$--modules, that is, the category of additive functors $(\T^c)^\mathrm{op}\to \mathrm{Ab}$. We also consider the restricted Yoneda functor  
    \[\h\colon \T\to \MMod\T^c, \quad x\mapsto {\T}(-,x)|_{\T^c}.\] 
    The restricted Yoneda functor sends distinguished triangles to long exact sequences of abelian groups. 

    A map $f\colon x\to y$ in $\T$ is a \textit{pure monomorphism} if $\h(f)$ is a monomorphism, that is, $\h(f)(c)$ is injective for all $c\in \T^c$. Dually, we define \textit{pure epimorphisms}. An object $x\in \mathcal{T}$ is a \textit{pure subobject of} $y$ if there is a pure monomorphism $x\to y$. An object $x\in \T$ is \textit{pure-injective} if $\h(x)$ is injective in $\MMod\T^c$. Equivalently, $x$ is pure-injective if any pure monomorphism $x\to y$ splits. A morphism $f\colon x\to y$ is \textit{phantom} if $\h(f)=0$.  A \textit{pure triangle} is a distinguished triangle 
    \[
    x\xrightarrow[]{}x'\xrightarrow[]{}x''\xrightarrow[]{}\Sigma x \quad  \mbox{such that} \quad 0\to \h(x)\to \h(x')\to \h(x'')\to 0
    \]
     is exact.
\end{recollection}

\begin{recollection}\label{rec-phantoms-pure monos}
     There is a strong connection between pure monomorphisms, pure epimorphisms and phantom maps. Namely, given a distinguished triangle 
     \[
     x\xrightarrow[]{f}x'\xrightarrow[]{f'}x''\xrightarrow[]{f''}\Sigma x,
     \]
     we have that $f$ is phantom if and only if $f'$ is a pure monomorphism if and only if $f''$ is a pure epimorphisms.  See for instance \cite[Section 1]{Kra00}.
\end{recollection}

\begin{remark}
    Note that any pure monomorphism $f \colon x \to y$ in $\T$ extends to a pure triangle, namely 
    \[
    x\to y\to \mathrm{cone}(f). 
    \]
    Conversely, if 
    \[
    x\xrightarrow[]{f}x'\xrightarrow[]{f'}x''\xrightarrow[]{f''}\Sigma x
    \]
    is a pure triangle, then $f$ is a pure monomorphism, $f'$ is a pure epimorphism, and $f''$ is a phantom map.  
\end{remark}

 Our interest in pure-injective objects is the following detection property which is  Corollary 1.10 in \cite{Kra00}:

\begin{lemma}\label{lemma-krause 1.10}
 Let $\T$ be a compactly generated triangulated category.  Let $x \in \mathcal{T}$ such that $\mathcal{T}(x,y)=0$ for all indecomposable pure-injective objects $y\in \mathcal{T}$. Then $x=0$.    
\end{lemma}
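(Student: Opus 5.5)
The plan is to argue with the functor $\h$ and injective objects in $\MMod\T^c$, following Krause's argument. Suppose $x\neq 0$; we want an indecomposable pure-injective $y$ with $\T(x,y)\neq 0$.

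\textbf{Step 1: a nonzero map to a pure-injective.} Since $\T$ is compactly generated and $x\neq 0$, there is a compact $c$ and a nonzero map $c\to \Sigma^n x$, so $\h(x)\neq 0$ after shifting. Brown representability for the cohomological functor $\Hom_{\mathbb Z}(\T(-,x)|_{\ldots},\mathbb Q/\mathbb Z)$ gives the standard pure-injective objects. Concretely, for compact $c$ let $y_c$ represent $t\mapsto \Hom_{\mathbb Z}(\T(c,t),\mathbb Q/\mathbb Z)$. These Brown--Comenetz type objects are pure-injective, because $\h(y_c)\cong \Hom(\T(c,-)|\ldots,\mathbb Q/\mathbb Z)$ is the dual of a flat (representable) functor, hence injective. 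Taking $c$ with $\T(c,x)\neq 0$, we get $\T(x,y_c)\cong \Hom(\T(c,x),\mathbb Q/\mathbb Z)\neq 0$. So $x$ maps nontrivially to some pure-injective object.

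\textbf{Step 2: reduce to indecomposables.} This is the main obstacle, since a general pure-injective need not be a product or sum of indecomposables. I would use the Ziegler-spectrum/purity fact, from \cite{Kra00}, that $\h$ induces an equivalence between pure-injectives in $\T$ and injectives in $\MMod\T^c$. The plan is then:
\begin{enumerate}
\item Consider the nonzero finitely presented part. Choose a compact $c$ and a nonzero $\alpha\in \h(x)(c)$, and pass to the image of the induced map $\T(-,c)\to \h(x)$. This is a nonzero finitely generated subobject $M$ of $\h(x)$.
\item Find a simple quotient, or at least a nonzero morphism from $M$ into an indecomposable injective $E$. The category $\MMod\T^c$ is locally coherent, and Zorn's lemma gives a maximal subobject of the cyclic module $M$, hence a simple quotient $S$. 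Take $E=E(S)$, the injective envelope of $S$, which is indecomposable.
\item By injectivity, the composite $M\to S\to E$ extends to a map $\h(x)\to E$, and this map is nonzero.
\end{enumerate}

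\textbf{Step 3: lift back to $\T$.} Write $E\cong \h(y)$ with $y$ pure-injective. Since $\h$ is an equivalence on pure-injectives, $y$ is indecomposable. By Krause's lemma, $\T(x,y)\to \Hom(\h x,\h y)$ is bijective whenever $y$ is pure-injective, so the nonzero map $\h(x)\to E$ comes from a nonzero map $x\to y$. This contradicts the hypothesis, so $x=0$. The delicate inputs are this bijectivity of $\h$ on maps into pure-injectives, which I would prove via the Brown--Comenetz representation in Step 1 together with the fact that every injective is a summand of a product of the $\h(y_c)$, and the existence of simple quotients of cyclic modules.
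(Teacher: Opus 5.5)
Your argument is correct and is essentially the proof of \cite[Corollary 1.10]{Kra00}, which the paper quotes without proof. The steps are: $\h(x)\neq 0$ maps nontrivially to the indecomposable injective envelope of a simple quotient of a cyclic subobject; that envelope is $\h(y)$ for an indecomposable pure-injective $y$; and the map lifts to a nonzero $x\to y$ because $\T(x,y)\to\Hom(\h x,\h y)$ is bijective for pure-injective $y$. The only input left implicit in your sketch of $E\cong\h(y)$ is that the idempotent lifted to a product of the $y_c$ splits in $\T$, which holds because $\T$ has countable coproducts.
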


We also need to recall the notion of a definable subcategory.

\begin{recollection}
   Let $\mathcal{T}$ be a compactly generated triangulated category. A functor $F\colon \mathcal{T}\to \mathrm{Ab}$ is \textit{coherent} if it has a presentation of the form \[\T(x,-)\xrightarrow[]{\T(f,-)} \T(y,-)\to F\to 0 \] for some $f\colon x\to y$ in $\mathcal{T}^c$. A full subcategory $\mathcal{D}$ of $\mathcal{T}$ is \textit{definable} if there is a set of coherent functors $\Phi$ such that \[\mathcal{D}=\{x\in \mathcal{T}\mid F(x)=0, \,  \forall F\in \Phi\}.\]
   For a given a class of objects $\mathcal{X}$ of $\mathcal{T}$, we write $\mathbf{Def}(\mathcal{X})$ to denote the smallest definable subcategory of $\mathcal{T}$ containing $\mathcal{X}$, and refer to it as the \textit{definable closure of $\mathcal{X}$}.  Since the intersection of definable subcategories is again definable, the previous closure operator is well defined. See \cite[Section 2]{BW23} for further details. 
\end{recollection}

\begin{remark}\label{rem-definible}
Let us emphasize that definable subcategories are not necessarily triangulated. Nevertheless, they are always closed under pure subobjects, pure quotients, and products. Since any coproduct is a pure subobject of the corresponding product, definable subcategories are also closed under coproducts. Moreover, when the ambient category admits a suitable enhancement (e.g., under  \cref{convention}), definable subcategories can be characterized precisely as those closed under pure subobjects, pure quotients, and products (see \cite[2.17]{BW23}).
\end{remark}

\begin{convention}\label{convention}
    From now on, all triangulated categories, as well as exact functors between them, are assumed to arise from stable $\infty$-categories, without further mention.
\end{convention}

 With this convention in place, we can introduce the notion of a definable functor between compactly generated triangulated categories, following \cite[Definition 4.1]{BW23} and the equivalent characterizations of \cite[Theorem 4.16]{BW23}.

\begin{definition}
    Let  $f\colon \T\to \S$ be an exact functor between compactly generated triangulated categories. Then we say that $f$ is a \textit{definable functor} if it commutes with products and filtered homotopy colimits.
\end{definition}

We focus on definable functors because they interact well with definable subcategories. In particular, definable functors preserve pure triangles and pure-injective objects. A fundamental example to keep in mind is the following:

\begin{example}
    Let $f^\ast\colon \T\to \S$ be a geometric functor between rigidly-compactly generated tt-categories. Then the right adjoint $f_\ast\colon \S\to \T$ is a definable functor (see \cite[Example 5.3]{BW23}).  A criterion for when $f^*$ is definable is also given in \cite[Example 5.4]{BW23}, namely, when Grothendieck-Neeman duality holds for $f^\ast$.  In particular, this holds when the right adjoint $f^{(1)}$ of $f_*$ coincides with $f^*$. 
\end{example}

\begin{notation}
    Let $\mathcal{X}$ be a class of objects in a compactly generated triangulated category $\T$.  Following \cite{BW23}, we write $\mathbf{pure}(\mathcal{X})$ to denote the \textit{pure-closure of $\mathcal{X}$}, that is, the closure of $\mathcal{X}$ under pure subobjects.
\end{notation}

We obtain the following observation: 

\begin{proposition}\label{prop:pures in f_ast D}
     Let $f^\ast\colon \mathcal{T}\to \mathcal{S}$ be a geometric functor between rigidly-compactly generated tt-categories, and let $\mathcal{D}$ be a definable subcategory of $\mathcal{S}$. If $x$ is a pure-injective object in $\mathcal{T}$ which lies in $\mathbf{pure}(f_\ast\mathcal{D})$, then there is an object $y$ in $\mathcal{D}$ such that $x$ is a retract of $f_\ast(y)$.
\end{proposition}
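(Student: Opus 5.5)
The argument is almost formal: unwind what it means to lie in $\mathbf{pure}(f_\ast\mathcal{D})$, use pure-injectivity of $x$ to split the relevant pure monomorphism, and use that $\mathcal{D}$ is definable so that the target can be taken in $f_\ast\mathcal{D}$ itself.

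First we unwind the hypothesis. By definition of the pure-closure, $x\in\mathbf{pure}(f_\ast\mathcal{D})$ means there is a pure monomorphism $g\colon x\to z$ with $z$ in the closure of $f_\ast\mathcal{D}$. If $\mathbf{pure}(-)$ is read as a single closure step under pure subobjects, then $z\simeq f_\ast(y)$ for some $y\in\mathcal{D}$ up to isomorphism. If instead it is read as an iterated closure, we use that a composite of pure monomorphisms is a pure monomorphism, since monomorphisms compose in $\MMod\T^c$. Hence any chain $x\to x_1\to\cdots\to f_\ast(y)$ of pure monomorphisms collapses to a single pure monomorphism $x\to f_\ast(y)$. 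Either way we get $y\in\mathcal{D}$ and a pure monomorphism $g\colon x\to f_\ast(y)$.

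Second, we use pure-injectivity. By \cref{basic res of purity}, a pure-injective object is characterised by the property that every pure monomorphism out of it splits. So $g$ admits a retraction $r\colon f_\ast(y)\to x$ with $rg=\mathrm{id}_x$, and $x$ is a retract of $f_\ast(y)$, as claimed.

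The only point needing care is whether the closure is genuinely a one-step closure, or whether it might also involve taking objects isomorphic to pure subobjects of pure subobjects. This is handled by the composition argument above. One might also worry that the intended closure involves products or other operations before taking pure subobjects. The place where definability of $\mathcal{D}$ enters is exactly here: $f_\ast$ is definable, so it commutes with products, and $\mathcal{D}$ is closed under products, pure subobjects and coproducts by \cref{rem-definible}. Consequently $f_\ast\mathcal{D}$ is closed under products, and any such enlargement of the source class is still of the form $f_\ast(y)$ with $y\in\mathcal{D}$. I expect this bookkeeping, rather than any substantive step, to be the main (mild) obstacle.
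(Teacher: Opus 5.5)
Your proposal is correct and follows the paper's proof exactly: obtain a pure monomorphism $x\to f_\ast(y)$ with $y\in\mathcal{D}$, then split it using pure-injectivity of $x$. The extra bookkeeping about iterated closures and products is harmless but unnecessary. Definability of $\mathcal{D}$ plays no role in this argument.
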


\begin{proof}
    If $x$ is pure-injective in $\T$ which lies in  $\mathbf{pure}(f_\ast\mathcal{D})$, then there is a pure monomorphism $x\to f_\ast(y)$ for some $y\in \D$. Using that $x$ is pure-injective, such pure monomorphism must split. 
\end{proof}

\begin{remark}
    In fact, the object $y$ from the previous corollary can be chosen to be a pure-injective object in $\D$ as proved in an earlier version of \cite{BW23}. However, we do not need this fact here. 
\end{remark}

\begin{proposition}\label{pure closure of f_ast}
    Let $f^\ast\colon \T\to \S$ be a geometric functor between rigidly-compactly generated tt-categories, and let $\D$ be a definable subcategory of $\S$.  Then the pure-closure $\mathbf{pure}(f_\ast\D)$ of $f_\ast\D$ is a definable subcategory, where $f_\ast\D$ denotes the essential image of $\D$ under $f_\ast$. In symbols, we have that 
    \[
    \mathbf{pure}(f_\ast\D)=\mathbf{Def}(f_\ast\D). 
    \]
\end{proposition}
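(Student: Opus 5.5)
We show $\mathbf{pure}(f_\ast\D)$ is closed under pure subobjects, pure quotients and products. By \cref{rem-definible}, under \cref{convention}, this characterizes definable subcategories (\cite[2.17]{BW23}). Since $\mathbf{Def}(f_\ast\D)$ is definable, it contains $f_\ast\D$ and is closed under pure subobjects, so $\mathbf{pure}(f_\ast\D)\subseteq \mathbf{Def}(f_\ast\D)$. Once the reverse closure properties are established, minimality of the definable closure gives equality.

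\textbf{Pure subobjects.} A composite of pure monomorphisms is a pure monomorphism, because $\h$ is a functor and a composite of monomorphisms is a monomorphism. So $\mathbf{pure}(f_\ast\D)$ is closed under pure subobjects by construction.

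\textbf{Products.} Suppose $x_j\to f_\ast(y_j)$ are pure monomorphisms with $y_j\in\D$. The product map $\prod_j x_j\to \prod_j f_\ast(y_j)$ is a pure monomorphism, because $\h$ commutes with products (a compact $c$ satisfies $\T(c,\prod x_j)=\prod\T(c,x_j)$) and a product of injections is injective. Moreover $f_\ast$ is definable, so it preserves products, giving $\prod_j f_\ast(y_j)\simeq f_\ast(\prod_j y_j)$. Finally $\prod_j y_j\in\D$ because $\D$ is definable.

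\textbf{Pure quotients.} This is the main obstacle. Let $x\to x'$ be a pure epimorphism with $x\in\mathbf{pure}(f_\ast\D)$, and fix a pure monomorphism $x\to f_\ast(y)$ with $y\in\D$. We must embed $x'$ purely into some $f_\ast(y')$ with $y'\in\D$. A pushout-type argument inside $\T$ is awkward, so we use pure-injective envelopes and the detection via pure-injectives instead.

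First, $x'$ is a pure subobject of its pure-injective envelope $\mathrm{PE}(x')$, and $\mathrm{PE}(x')\in\mathbf{Def}(x')$. Consequently, it suffices to show that $\mathbf{pure}(f_\ast\D)$ contains every pure-injective object of $\mathbf{Def}(f_\ast\D)$.

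For this we use the standard description of definable closure: the pure-injectives in $\mathbf{Def}(\mathcal{X})$ are exactly the retracts of products of pure-injective envelopes of objects of $\mathcal X$ (equivalently, of products of objects of $\mathcal X$, via ultraproduct arguments). In our case $\mathrm{PE}(f_\ast y)$ is a summand of $f_\ast$ of a pure-injective object in $\D$. To see this, take the pure monomorphism $y\to \mathrm{PE}(y)$, with $\mathrm{PE}(y)\in\D$. Since $f_\ast$ is definable, it preserves pure monomorphisms and pure-injectives, so $f_\ast y\to f_\ast\mathrm{PE}(y)$ is a pure monomorphism into a pure-injective object. Hence $\mathrm{PE}(f_\ast y)$ is a retract of $f_\ast\mathrm{PE}(y)$.

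Combining this with closure under products (established above) and retracts (retracts are pure subobjects), every pure-injective object of $\mathbf{Def}(f_\ast\D)$ lies in $\mathbf{pure}(f_\ast\D)$. Therefore $x'\in\mathbf{pure}(f_\ast\D)$. This also shows directly that $\mathbf{Def}(f_\ast\D)\subseteq\mathbf{pure}(f_\ast\D)$, since every object embeds purely into its envelope, which lies in the definable closure.

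The delicate input, which I would cite from \cite{BW23} or Krause, is the fact that a definable subcategory is determined by its pure-injectives and that these are generated from $\mathcal X$ under products and summands.
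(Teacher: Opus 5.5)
The gap is the ``standard description'' you rely on in the last step: as stated, it is false. Take $\T=\mathbf{D}(\mathbb{Z})$ and $\mathcal{X}=\{\mathbb{Z}/p^n : n\geq 1\}$, viewed in degree $0$. Each $\mathbb{Z}/p^n$ is pure-injective, since it is its own character dual, so it is its own envelope. The telescope triangle $\bigoplus_n\mathbb{Z}/p^n\to\bigoplus_n\mathbb{Z}/p^n\to\mathbb{Z}/p^\infty\to\Sigma\bigoplus_n\mathbb{Z}/p^n$ is pure. Hence the Pr\"ufer group $\mathbb{Z}/p^\infty$ is a pure quotient of an object of $\mathbf{Def}(\mathcal{X})$, and so lies in $\mathbf{Def}(\mathcal{X})$. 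It is also pure-injective, being a summand of $\mathbb{Q}/\mathbb{Z}$. However, any product of copies of the $\mathbb{Z}/p^n$ is a reduced abelian group in degree $0$: an element divisible by every $p^k$ vanishes in each coordinate. Applying $H_0$ then shows that $\mathbb{Z}/p^\infty$ is not a retract of such a product. Your parenthetical ``equivalently, of products of objects of $\mathcal{X}$, via ultraproduct arguments'' conflates two things. An ultraproduct is a filtered colimit of products, not a product, and it is exactly such a colimit that produces $\mathbb{Z}/p^\infty$ here. Tellingly, your argument only uses that $f_\ast$ preserves products, pure monomorphisms and pure-injectives. It never uses that $f_\ast$ commutes with filtered homotopy colimits, nor that $\D$ is closed under them, and that is the input the statement actually needs.

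The repair goes through reduced products rather than envelopes. For a directed system $(z_\alpha)$, the colimit $\mathrm{colim}_\alpha z_\alpha$ is a pure subobject of the reduced product $\mathrm{colim}_{S\in\mathcal{F}}\prod_{\beta\in S}z_\beta$, where $\mathcal{F}$ is the filter generated by the upper cones. Indeed, after applying $\h$, which commutes with products and filtered colimits, this map is the injection $\varinjlim_\alpha A_\alpha\to\prod_\beta A_\beta/\mathcal{F}$. Reduced products also preserve pure monomorphisms. Since $f_\ast$ preserves products and filtered homotopy colimits, and $\D$ is closed under both, $f_\ast\D$ is closed under reduced products. Hence $\mathbf{pure}(f_\ast\D)$ is closed under products, pure subobjects and filtered homotopy colimits. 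In the enhanced setting of \cref{convention} this is a standard characterization of definable subcategories. In particular, every pure-injective object of $\mathbf{Def}(f_\ast\D)$ is then a retract of an object of $f_\ast\D$, which is what your envelope argument was after. Your remaining steps are correct: closure under products and pure subobjects, the inclusion $\mathbf{pure}(f_\ast\D)\subseteq\mathbf{Def}(f_\ast\D)$, and $\mathrm{PE}(f_\ast y)$ being a retract of $f_\ast\mathrm{PE}(y)$. With the reduced-product argument, though, the envelope detour becomes unnecessary. For comparison, the paper gives no direct argument: it quotes \cite[Proposition 4.7(2)]{BW23}, whose hypothesis is exactly that $f_\ast$ is definable, i.e. preserves products and filtered homotopy colimits.
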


\begin{proof}
    This is a particular case of Proposition 4.7 (2) in \cite{BW23} applied to $f_\ast$. 
\end{proof}

\begin{remark}
The previous proposition tells us that $f_\ast\D$ may fail to be a definable subcategory of $\T$ but only because of the possible lack of closure under pure subobjects.     
\end{remark}

\begin{recollection}\label{def tensor ideals}
    Let $\T$ be a rigidly-compactly generated tt-category. A \textit{definable tensor-ideal} $\D$ of $\T$ is a definable subcategory $\D$ which is also a \textit{tensor-ideal}, that is, $\T\otimes \D\subseteq \D$. Note that $\D$ is not required  to be triangulated. In fact, a definable subcategory $\D$ satisfying that 
    \[
    x\otimes \D\subseteq \D \quad \mbox{for all} \quad x\in \T^c
    \]
     is already a definable tensor-ideal (see \cite[Lemma 5.1.11]{Wag23}). Moreover, given a class of objects $\mathcal{X}$ of $\D$ which is stable under the tensor product, $\T\otimes \mathcal{X}\subseteq \mathcal{X}$, we have that $\mathbf{Def}(\mathcal{X})$ is a definable tensor-ideal (see \cite[Proposition 5.1.13]{Wag23}).
\end{recollection}

\begin{lemma}\label{lemma-pureclosure}
    Let $\T$ be a rigidly-compactly generated tt-category, and let $\mathcal X$ be a collection of objects in $\T$. If $\mathcal{X}$ is stable under the tensor product, then so is $\mathbf{pure}(\mathcal{X})$. 
\end{lemma}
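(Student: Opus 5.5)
Let $x\in\mathbf{pure}(\mathcal X)$ and $t\in\T$; we must show $t\otimes x\in\mathbf{pure}(\mathcal X)$. By definition, $x$ is a pure subobject of some $z\in\mathcal X$, so there is a pure monomorphism $f\colon x\to z$. Since $\mathcal X$ is stable under the tensor product, $t\otimes z\in\mathcal X$. It therefore suffices to show that
\[
t\otimes f\colon t\otimes x\to t\otimes z
\]
is again a pure monomorphism. Then $t\otimes x$ is a pure subobject of an object of $\mathcal X$, which is what we want.

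We reduce purity to phantoms using \cref{rec-phantoms-pure monos}. Complete $f$ to a distinguished triangle $w\xrightarrow{g}x\xrightarrow{f}z\to\Sigma w$. Then $f$ is a pure monomorphism if and only if $g$ is phantom. Since $t\otimes-$ is exact, $t\otimes w\to t\otimes x\to t\otimes z\to\Sigma(t\otimes w)$ is distinguished. So the claim reduces to showing that $t\otimes g$ is phantom whenever $g$ is, i.e.\ that phantom maps form a tensor ideal of morphisms.

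For compact $t$ this follows from rigidity. For $c\in\T^c$, the adjunction $\T(c,t\otimes w)\cong\T(c\otimes t^\vee,w)$ is natural in $w$, and $c\otimes t^\vee$ is compact. Hence $\h(t\otimes g)(c)$ identifies with $\h(g)(c\otimes t^\vee)=0$.

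For arbitrary $t$, write $t$ as a filtered homotopy colimit of compact objects $t_\alpha$; this is possible because $\T$ is compactly generated and we work in the $\infty$-categorical setting of \cref{convention}. Since $\otimes$ preserves colimits in each variable, $t\otimes g\simeq\operatorname{colim}_\alpha (t_\alpha\otimes g)$. A compact $c$ commutes with filtered homotopy colimits on $\T(c,-)$, so $\h(t\otimes g)(c)=\operatorname{colim}_\alpha\h(t_\alpha\otimes g)(c)=0$. Alternatively, one can argue directly that $t\otimes f=\operatorname{colim}_\alpha(t_\alpha\otimes f)$ is a filtered colimit of pure monomorphisms. Then $\h(t\otimes f)$ is a filtered colimit of monomorphisms in $\MMod\T^c$, and filtered colimits are exact there.

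The main obstacle is this last step for non-compact $t$: making precise that $\h$ turns the filtered homotopy colimit into a filtered colimit of modules. If needed, I would cite that every object is a filtered colimit of compacts and that $\h$ preserves filtered homotopy colimits (Krause, \cite{Kra00}). Alternatively, I would cite the standard fact that phantoms form a tensor ideal in rigidly-compactly generated categories.
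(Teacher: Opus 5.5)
Your proof is correct. Its skeleton matches the paper's: you reduce preservation of pure monomorphisms under $t\otimes-$ to preservation of phantom maps, using the triangle criterion of \cref{rec-phantoms-pure monos} and exactness of $t\otimes-$.

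The difference is in how you show that phantoms form a tensor ideal of morphisms. The paper does this in one line by citing \cite[Proposition 2.10]{BKS_fields}. That result says $\MMod\T^c$ carries a symmetric monoidal structure for which $\h$ is symmetric monoidal, so $\h(t\otimes g)\cong\h(\mathrm{id}_t)\otimes\h(g)=0$. You instead argue by hand. For compact $t$, rigidity gives $\T(c,t\otimes g)\cong\T(c\otimes t^\vee,g)$. For general $t$, you use the enhancement of \cref{convention} to write $t$ as a filtered colimit of compacts, then commute $\T(c,-)$, or equivalently $\h$, past that colimit.

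Your route is self-contained and avoids the monoidal structure on $\MMod\T^c$, at the cost of explicitly using the $\infty$-categorical model; that model is already in force where the lemma appears, so nothing is lost. The paper's route is shorter but places the real content in the cited monoidality of the restricted Yoneda functor.
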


\begin{proof}
    Let $x$ be an object in $\T$. By \cite[Proposition 2.10]{BKS_fields}, the functor $x\otimes -$ preserves phantom maps, and hence preserves pure monomorphisms. Indeed, this follows from the fact that the module category $\MMod\T^c$ is symmetric monoidal, and the restricted Yoneda embedding is symmetric monoidal. That is, if $g$ is a phantom map, then 
  \[\h(x\otimes g)\simeq \h(\mathrm{id}_x)\otimes \h(g)\simeq 0.\]
  Hence  $x\otimes g$ is phantom as well. The claim about pure monomorphisms follows from the relation between pure monomorphisms and phantom maps as described in  \cref{rec-phantoms-pure monos} and the fact that $x\otimes-$ preserves distinguished triangles. 
\end{proof}

The following result already appears in \cite[Lemma 6.11]{BW23} under more general assumptions.

\begin{lemma}\label{lemma: image of f_ast}
    Let $f^\ast\colon \mathcal{T}\to \mathcal{S}$ be a geometric functor between rigidly-compactly generated tt-categories. Let $\mathcal{D}$ be a definable tensor-ideal of $\mathcal{S}$. Then the pure-closure $\mathbf{pure}(f_\ast\mathcal{D})$ of $f_\ast\mathcal{D}$ is a definable tensor-ideal of $\mathcal{T}$.
\end{lemma}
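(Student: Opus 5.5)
By \cref{pure closure of f_ast}, $\mathbf{pure}(f_\ast\D)=\mathbf{Def}(f_\ast\D)$ is already a definable subcategory of $\T$. So it only remains to check that it is a tensor-ideal. The plan is to show that $f_\ast\D$ is itself stable under tensoring with arbitrary objects of $\T$, and then pass to the pure-closure.

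The first step is the projection formula. Let $x\in\T$ and $y\in\D$. Then
\[
x\otimes f_\ast(y)\simeq f_\ast(f^\ast(x)\otimes y).
\]
Since $\D$ is a tensor-ideal of $\S$, the object $f^\ast(x)\otimes y$ lies in $\D$. Hence $x\otimes f_\ast(y)\in f_\ast\D$, so $\T\otimes f_\ast\D\subseteq f_\ast\D$; that is, the essential image $f_\ast\D$ is stable under the tensor product.

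The second step passes to the pure-closure. \cref{lemma-pureclosure} says that the pure-closure of a class stable under the tensor product is again stable under the tensor product. Applied to $\mathcal{X}=f_\ast\D$, it gives $\T\otimes\mathbf{pure}(f_\ast\D)\subseteq\mathbf{pure}(f_\ast\D)$. Concretely: if $z\to f_\ast(y)$ is a pure monomorphism, then $x\otimes z\to x\otimes f_\ast(y)$ is again a pure monomorphism, because tensoring preserves phantoms and hence pure monos. Its target lies in $f_\ast\D$ by the first step, so $x\otimes z\in\mathbf{pure}(f_\ast\D)$.

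Combining the two steps, $\mathbf{pure}(f_\ast\D)$ is a definable subcategory which is a tensor-ideal, i.e.\ a definable tensor-ideal in the sense of \cref{def tensor ideals}. Alternatively, the last part of \cref{def tensor ideals} applied to the tensor-stable class $f_\ast\D$ shows that $\mathbf{Def}(f_\ast\D)$ is a definable tensor-ideal, and this class equals $\mathbf{pure}(f_\ast\D)$ by \cref{pure closure of f_ast}.

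No step here is a serious obstacle. The substantive input, that the pure-closure is already definable, is supplied by \cref{pure closure of f_ast} via \cite{BW23}; the rest is the projection formula.
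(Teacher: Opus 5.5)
Your proposal is correct and follows the paper's proof essentially step for step: definability comes from \cref{pure closure of f_ast}, the projection formula together with $\D$ being a tensor-ideal shows that $f_\ast\D$ is tensor-stable, and \cref{lemma-pureclosure} carries this stability over to $\mathbf{pure}(f_\ast\D)$. Your alternative route via $\mathbf{Def}(f_\ast\D)$ and the last part of \cref{def tensor ideals} is also valid, though not needed.
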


\begin{proof}
  We already know that $\mathbf{pure}(f_\ast\D)$ is a definable subcategory of $\T$ by  \cref{pure closure of f_ast}. It remains to verify that $\mathbf{pure}(f_\ast\D)$ is stable under the tensor product. By  \cref{lemma-pureclosure} is enough to verify that  $f_\ast\D$ is tensor-ideal. 
But this follows  by the projection formula: for any $x\in \T$ and $w\in \S$, we have  
  \[
  x \otimes f_\ast(w)\simeq f_*(f^*(x)\otimes w). 
  \]
  From here, we deduce that, for any $x\in \T$,  $x\otimes f_\ast\D$ must be in $f_\ast\D$ using that $\D$ is closed under the tensor product. It follows that $x\otimes f_\ast(w)$ is in $\mathbf{pure}(f_\ast\D)$. 
\end{proof}

\section{Conservative geometric functors}

In this section, we introduce the notion of a pure descendable family of geometric functors. An immediate consequence of this definition is that such families are jointly conservative. Let us fix some notation first. 

\begin{notation}
    Let $\mathcal{X}$ be a collection of objects of a rigidly-compactly generated tt-category $\T$. We write $\mathbf{prod}(\mathcal{X})$ to denote the closure of $\mathcal{X}$ under products, and write $\mathbf{pure}^\Delta_\Pi(\mathcal{X})$ as short for $\mathbf{pure}(\mathrm{Thick}\left(\mathbf{prod}(\mathcal{X})\right))$, that is, the pure-closure of the smallest thick subcategory of $\T$ containing the product-closure  of  $\mathcal{X}$. 
\end{notation}

%%%%%%%%-----------------------------------

\begin{definition}\label{def-puredesc}
    Let $(f_i^\ast\colon \T\to \S_i)_{i\in \mathcal I}$ be a family of geometric functors. We say that the family $(f_i^\ast)_{i\in \mathcal{I}}$ is \textit{pure descendable} if 
    \[
    \mathbf{pure}^\Delta_\Pi\left(\bigcup_{i\in \mathcal I}(f_i)_\ast\S_i\right)= \T.
    \]  
   Equivalently,  $\mathbf{pure}^\Delta_\Pi\left(\bigcup_{i\in \mathcal I}(f_i)_\ast\S_i\right)$ is a tensor-ideal and  contains the monoidal unit.
\end{definition}

Let us give sufficient conditions to ensure that $\mathbf{pure}^\Delta_\Pi\left(\bigcup_{i\in \mathcal I}(f_i)_\ast\S_i\right)$ is closed under the tensor product.

\begin{proposition}\label{prop:pure_Pi is tensor ideal}
    Let $(f_i^\ast\colon \T\to \S_i)_{i\in \mathcal I}$ be a family of geometric functors. If $\mathbf{pure}^\Delta_\Pi(\bigcup_{i\in \mathcal I}(f_i)_\ast\S_i)$ is a definable subcategory of $\T$, then it is also a tensor-ideal. 
\end{proposition}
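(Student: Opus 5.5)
The strategy is to use the criterion recalled in \cref{def tensor ideals}: a definable subcategory $\D$ of $\T$ with $x\otimes\D\subseteq\D$ for all compact $x\in\T^c$ is already a definable tensor-ideal. Write $\mathcal{X}=\bigcup_{i}(f_i)_\ast\S_i$ and $\D=\mathbf{pure}^\Delta_\Pi(\mathcal{X})$. Since $\D$ is definable by hypothesis, it suffices to show that $c\otimes\D\subseteq\D$ for every $c\in\T^c$.

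We check stability under $c\otimes-$ along each stage of the construction of $\D$.

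\textbf{Step 1: $\mathcal{X}$.} By the projection formula, $c\otimes (f_i)_\ast(w)\simeq (f_i)_\ast(f_i^\ast(c)\otimes w)$. This lies in $(f_i)_\ast\S_i$, so $\mathcal{X}$ is stable under tensoring with any object, in particular any compact one.

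\textbf{Step 2: $\mathbf{prod}(\mathcal{X})$.} Here compactness is used essentially. A compact object $c$ is dualizable, so $c\otimes-\simeq \underline{\Hom}(c^\vee,-)$ is a right adjoint and commutes with products. Hence $c\otimes\prod_j x_j\simeq\prod_j (c\otimes x_j)$, which lies in $\mathbf{prod}(\mathcal{X})$ by Step 1. This is why we reduce to compact $c$: a general $x\otimes-$ need not preserve products. This is the one genuinely non-formal point, and the reduction from \cref{def tensor ideals} (\cite[Lemma 5.1.11]{Wag23}) handles it.

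\textbf{Step 3: $\mathrm{Thick}(\mathbf{prod}(\mathcal{X}))$.} Consider the full subcategory $\{y\in\T : c\otimes y\in \mathrm{Thick}(\mathbf{prod}(\mathcal{X}))\}$. Because $c\otimes-$ is exact and preserves retracts, this subcategory is thick. By Step 2 it contains $\mathbf{prod}(\mathcal{X})$, so it contains $\mathrm{Thick}(\mathbf{prod}(\mathcal{X}))$.

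\textbf{Step 4: the pure-closure.} Let $y\to z$ be a pure monomorphism with $z\in\mathrm{Thick}(\mathbf{prod}(\mathcal{X}))$. As in the proof of \cref{lemma-pureclosure}, $c\otimes-$ preserves phantom maps and hence pure monomorphisms. Therefore $c\otimes y\to c\otimes z$ is a pure monomorphism, and $c\otimes z$ lies in the thick subcategory by Step 3. It follows that $c\otimes y\in\D$.

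Combining the four steps gives $c\otimes\D\subseteq\D$ for all $c\in\T^c$. Since $\D$ is definable, \cref{def tensor ideals} shows that $\D$ is a tensor-ideal.
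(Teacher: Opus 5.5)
Your proof is correct and follows the paper's argument. You use the projection formula on the generators, the fact that tensoring with compact objects commutes with products, closure under thick operations, preservation of pure monomorphisms under $c\otimes-$, and finally the compact-object criterion of \cite[Lemma 5.1.11]{Wag23}. Your ordering is in fact slightly more careful than the paper's: you apply that criterion to the definable subcategory $\mathbf{pure}^\Delta_\Pi(\mathcal{X})$ itself, whereas the paper invokes it loosely for the thick subcategory, which need not be definable.
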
 

\begin{proof}
    Following the proof of  \cref{lemma: image of f_ast}, we know that tensoring with an object preserves pure monomorphisms. Hence we only need to verify that the collection $\mathrm{Thick}\left(\mathbf{prod}\left(\bigcup_{i\in \mathcal I}(f_i)_\ast\S_i\right)\right)$ is stable under the tensor product. In fact, by  \cref{def tensor ideals}, it is enough to show that the collection is stable under the tensor product with compact objects. But this follows since the functor $x\otimes -$, for $x\in \T^c$, commutes with products, combined with the projection formula for each geometric functor $f_i^\ast$, just as in the proof of  \cref{lemma: image of f_ast}. 
\end{proof}

\begin{corollary}\label{cor-pure=def}
If $\mathcal I$ is finite, then $\mathbf{pure}^\Delta_\Pi(\bigcup_{i\in \mathcal I}(f_i)_\ast\S_i)$ is a tensor-ideal.
\end{corollary}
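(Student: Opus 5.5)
The plan is to reduce to \cref{prop:pure_Pi is tensor ideal}: it suffices to show that when $\mathcal I$ is finite, $\mathbf{pure}^\Delta_\Pi\bigl(\bigcup_{i\in \mathcal I}(f_i)_\ast\S_i\bigr)$ is a definable subcategory of $\T$. The tensor-ideal property then follows.

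\textbf{Step 1: collapse the finite family to a single functor.} Set $\S:=\prod_{i\in\mathcal I}\S_i$. Because $\mathcal I$ is finite, $\S$ is again rigidly-compactly generated, with compacts the tuples of compacts and with the componentwise tensor product. The functor
\[
f^\ast=(f_i^\ast)_i\colon \T\to \S
\]
is geometric. Its right adjoint is $f_\ast(y_i)_i=\bigoplus_i (f_i)_\ast y_i=\prod_i (f_i)_\ast y_i$; the sum and product agree since $\mathcal I$ is finite. Taking $\D=\S$, which is trivially definable, \cref{pure closure of f_ast} shows that $\mathbf{pure}(f_\ast\S)$ is definable.

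\textbf{Step 2: identify $\mathbf{pure}(f_\ast\S)$ with $\mathbf{pure}^\Delta_\Pi\bigl(\bigcup_i(f_i)_\ast\S_i\bigr)$.} Write $\mathcal P$ for the latter. For the inclusion $\mathbf{pure}(f_\ast\S)\subseteq\mathcal P$, note that each $f_\ast(y)$ is a finite product of objects $(f_i)_\ast y_i$, so it lies in $\mathbf{prod}$ of the union. Then apply $\mathbf{pure}$ to both sides.

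For the reverse inclusion, note that $\mathbf{pure}(f_\ast\S)$ is definable, hence closed under products by \cref{rem-definible}. It also contains each $(f_i)_\ast\S_i$, by taking $y_j=0$ for $j\neq i$. So it contains $\mathbf{prod}\bigl(\bigcup_i (f_i)_\ast\S_i\bigr)$. It remains to see that it is thick. Closure under retracts holds because retracts are split, hence pure, monomorphisms. Closure under suspensions holds because $f_\ast$ commutes with $\Sigma$ and $\Sigma$ preserves pure monomorphisms. Closure under cones is the delicate point, since definable subcategories are not triangulated in general.

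\textbf{Step 3: closure under cones (the main obstacle).} I would first try the following. Given $u\colon a\to b$ with $a\to f_\ast y$ and $b\to f_\ast y'$ pure monomorphisms, pass to pure-injective envelopes. Then $a$ and $b$ become retracts of $f_\ast$ of pure-injectives, as in \cref{prop:pures in f_ast D}, and $u$ extends to a map between these ambient objects. By fullness issues one cannot expect that map to be of the form $f_\ast(v)$. So this route likely fails, and I would instead abandon exact equality in Step 2.

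Observe that $\mathcal P$ is sandwiched between $\mathbf{pure}(f_\ast\S)$ and the definable closure of $\mathrm{Thick}\bigl(\mathbf{prod}\bigl(\bigcup_i(f_i)_\ast\S_i\bigr)\bigr)$. Then argue directly that $\mathrm{Thick}(f_\ast\S)$ is closed under pure quotients and under filtered homotopy colimits, using that $f_\ast$ is definable. By the characterization in \cref{rem-definible}, this makes $\mathcal P$ definable.

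Concretely, I expect the intended argument to use that every object of the thick closure is built in finitely many steps from finite products. Closure under products commutes with these finite constructions, and purity is preserved by the definable functor $f_\ast$. Once definability of $\mathcal P$ is established, \cref{prop:pure_Pi is tensor ideal} gives the corollary.
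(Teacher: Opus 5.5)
\textbf{There is a genuine gap: your strategy cannot succeed.} It rests on showing that $\mathcal P=\mathbf{pure}^\Delta_\Pi\bigl(\bigcup_i(f_i)_\ast\S_i\bigr)$ is definable, so that \cref{prop:pure_Pi is tensor ideal} applies. That claim is false even for a single functor.

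Take $f^\ast=\mathbb F_p\otimes^L_{\mathbb Z}-\colon D(\mathbb Z)\to D(\mathbb F_p)$.
\begin{itemize}
\item The image $f_\ast D(\mathbb F_p)$ consists of the complexes whose homology groups are $\mathbb F_p$-vector spaces.
\item $\mathrm{Thick}(f_\ast D(\mathbb F_p))$ is the class of complexes $X$ with $p^N H_\ast(X)=0$ for some $N$. This class is thick, since the long exact sequence bounds exponents in cones. Any such $X\simeq\bigoplus_n\Sigma^n H_nX$ is built in $N$ steps from $\mathbb F_p$-modules using the filtration by $p^kH_\ast X$.
\item Pure monomorphisms in $D(\mathbb Z)$ are injective on homology (test against $\Sigma^n\mathbb Z$). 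So this class is already pure-closed, and it equals $\mathcal P$.
\item $\mathcal P$ contains every $\mathbb Z/p^n$, but contains neither $\prod_n\mathbb Z/p^n$ nor $\mathbb Z/p^\infty=\mathrm{colim}_n\,\mathbb Z/p^n$.
\end{itemize}
Hence $\mathcal P$ is closed neither under products nor under filtered homotopy colimits, so it is not definable. The same example also shows:
\begin{itemize}
\item Your Step 2 identification fails: $\mathbf{pure}(f_\ast\S)$ consists of complexes with $\mathbb F_p$-vector space homology, so it does not contain $\mathbb Z/p^2$.
\item The closure properties claimed in Step 3 fail for $\mathrm{Thick}(f_\ast\S)$, by the $\mathbb Z/p^\infty$ example.
\item The closing heuristic fails. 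Each $\mathbb Z/p^n$ needs $n$ cone-steps, and a product of objects needing unboundedly many steps need not lie in the thick closure.
\end{itemize}

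\textbf{The missing idea is that definability is not needed at all.} After your Step 1, observe that $\mathbf{prod}\bigl(\bigcup_i(f_i)_\ast\S_i\bigr)=f_\ast\S$. Indeed, the essential image of the right adjoint $f_\ast$ is closed under arbitrary products. Since $\mathcal I$ is finite, any product of objects from the union regroups as $\prod_i(f_i)_\ast\bigl(\prod_j y_{ij}\bigr)$. Thus $\mathcal P=\mathbf{pure}(\mathrm{Thick}(f_\ast\S))$.

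The projection formula $x\otimes f_\ast y\simeq f_\ast(f^\ast x\otimes y)$ shows that $f_\ast\S$ is stable under tensoring with every $x\in\T$, not only compact $x$. Since $x\otimes-$ is exact, the objects $z$ with $x\otimes z\in\mathrm{Thick}(f_\ast\S)$ form a thick subcategory containing $f_\ast\S$. Hence $\mathrm{Thick}(f_\ast\S)$ is a tensor-ideal.

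Finally, \cref{lemma-pureclosure} (tensoring preserves pure monomorphisms) transfers this to the pure closure. This is exactly the paper's argument. The definability hypothesis in \cref{prop:pure_Pi is tensor ideal} serves only to reduce to compact $x$, which commute with infinite products. Finiteness of $\mathcal I$ lets you bypass that reduction entirely.
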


\begin{proof}
    Without loss of generality we can assume that we have a single geometric functor $f^\ast\colon\T\to \S$. Since $f_\ast$ commutes with products, we  have that 
 \[
   \mathbf{pure}^\Delta_\Pi\left(f_\ast\S\right)= \mathbf{pure}\left(\mathrm{Thick}\left( f_\ast\S\right)\right). 
    \]
    By the projection formula (see proof of  \cref{lemma: image of f_ast}), we deduce that $\mathrm{Thick}\left( f_\ast\S\right)$ is tensor-ideal, and hence 
     the result follows by  \cref{lemma-pureclosure}.
\end{proof}

\begin{proposition}\label{prop:pure-desendable}
    If $(f_i^\ast\colon \T\to \S_i)_{i\in \mathcal I}$ is a pure descendable family of geometric functors,  then $(f_i^\ast)_{i \in \mathcal{I}}$ is jointly conservative.  
\end{proposition}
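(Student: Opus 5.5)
We fix $x\in\T$ with $f_i^\ast(x)\simeq 0$ for all $i\in\mathcal I$ and aim to show $x\simeq 0$. The idea is to show that the class
\[
\mathcal{C}_x:=\{\,z\in \T \mid x\otimes z\simeq 0\,\}
\]
contains $\mathbf{pure}^\Delta_\Pi\bigl(\bigcup_{i}(f_i)_\ast\S_i\bigr)$. By pure descendability this class is all of $\T$, so in particular it contains $\mathbb 1$, whence $x\simeq x\otimes\mathbb 1\simeq 0$. Working with $x\otimes-$ has a drawback: it commutes with coproducts but not with products. So we dualize and pass through mapping groups, using \cref{lemma-krause 1.10}.

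\textbf{Step 1.} For each $i$ and each $w\in\S_i$ we have, by the projection formula, $x\otimes (f_i)_\ast(w)\simeq (f_i)_\ast(f_i^\ast(x)\otimes w)\simeq 0$. To handle products, we consider the class
\[
\mathcal{A}_x:=\{\,z\in\T \mid \T(x,\Sigma^n z)=0 \text{ for all } n\in\mathbb Z\,\}.
\]
By adjunction, $\T(x,\Sigma^n(f_i)_\ast w)\cong \S_i(f_i^\ast x,\Sigma^n w)=0$, so each $(f_i)_\ast\S_i$ lies in $\mathcal{A}_x$. The class $\mathcal{A}_x$ is closed under products, since $\T(x,-)$ turns products into products. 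It is closed under cones and shifts by the long exact sequence, and under retracts. Hence $\mathrm{Thick}\bigl(\mathbf{prod}(\bigcup_i (f_i)_\ast\S_i)\bigr)\subseteq\mathcal{A}_x$.

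\textbf{Step 2 (main obstacle).} We need closure under pure subobjects. A pure monomorphism $z\to z'$ with $z'\in\mathcal{A}_x$ does not obviously force $\T(x,z)=0$ when $x$ is not compact. We avoid this by testing only against pure-injectives. Suppose $y$ is an indecomposable (or any) pure-injective object lying in $\mathbf{pure}^\Delta_\Pi(\cdots)$. Then there is a pure monomorphism $y\to z'$ with $z'$ in the thick closure, and it splits because $y$ is pure-injective. So $y$ is a retract of $z'\in\mathcal{A}_x$, giving $y\in\mathcal{A}_x$, as in \cref{prop:pures in f_ast D}.

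\textbf{Step 3.} By pure descendability every object of $\T$, in particular every indecomposable pure-injective $y$, lies in $\mathbf{pure}^\Delta_\Pi\bigl(\bigcup_i(f_i)_\ast\S_i\bigr)$. Step 2 then gives $\T(x,y)=0$ for all indecomposable pure-injectives $y$, and \cref{lemma-krause 1.10} yields $x\simeq 0$.

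With this route the tensor-ideal condition is used only through the equality with $\T$ in \cref{def-puredesc}, so the argument does not depend on the tensor structure beyond the projection formula.
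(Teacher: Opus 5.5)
Your argument is correct and is essentially the paper's proof. The paper shows that $\mathrm{Thick}\bigl(\mathbf{prod}(\bigcup_i (f_i)_\ast\S_i)\bigr)$ lies in the colocalizing subcategory $\bigl(\bigcap_i \ker f_i^\ast\bigr)^\perp$. It then notes that every indecomposable pure-injective is a retract of an object of that thick subcategory, because pure monomorphisms out of pure-injectives split, and concludes with \cref{lemma-krause 1.10}. These are exactly your Steps 1--3, with your $\mathcal{A}_x$ for a single $x$ replacing the orthogonal of the whole intersection of kernels.

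Your opening idea with $x\otimes -$ and the projection-formula computation in Step 1 are never actually used. The proof relies only on the adjunction $f_i^\ast\dashv (f_i)_\ast$ and on the fact that $\T(x,-)$ preserves products.
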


\begin{proof}
  By definition we have that
   \[
   \mathbf{pure}^\Delta_\Pi\left(\bigcup_{i\in \mathcal I}(f_i)_\ast\S_i\right)=\T.
   \]
  In particular, any indecomposable pure-injective object $x$ of $\T$ is a pure subobject of some object in $\mathrm{Thick}\left(\mathbf{prod}(\bigcup_{i \in \mathcal I} (f_i)_\ast \S_i)\right)$. That is, there exists a pure monomorphism $x \to y$ where $y\in \mathrm{Thick}\left(\mathbf{prod}(\bigcup_{i \in \mathcal I} (f_i)_\ast \S_i)\right)$. Since $x$ is pure-injective, this morphism splits and $x$ is a retract of $y$. In particular, $x\in \mathrm{Thick}\left(\mathbf{prod}(\bigcup_{i \in \mathcal I} (f_i)_\ast \S_i)\right)$.

  Now, since  
  \[
  \bigcup_{i \in \mathcal I} (f_i)_\ast \S_i\subseteq \left(\bigcap_{i \in \mathcal I} \ker f_i^\ast\right)^\perp
  \]
  and the latter is a colocalizing ideal, we get 
   \[
   \mathrm{Thick}\left(\mathbf{prod}\left(\bigcup_{i \in \mathcal I} (f_i)_\ast \S_i\right)\right)\subseteq \left(\bigcap_{i \in \mathcal I} \ker f_i^\ast\right)^\perp.
   \]
  In particular, any indecomposable pure-injective object $x$ of $\T$ is in $\left(\bigcap_{i \in \mathcal I} \ker f_i^\ast\right)^\perp$. By  \cref{lemma-krause 1.10}, we finally conclude that
  \[
  \bigcap_{i\in \mathcal I}\mathrm{ker}f_i^\ast=0
  \]
   as desired. 
\end{proof}

\begin{remark}
    Note that we the condition
    \[
    \mathbf{pure}^\Delta_\Pi\left(\bigcup_{i\in \mathcal I}(f_i)_\ast\S_i\right)=\mathcal{T}
    \]
     does not imply directly that $\bigcap_{i\in \mathcal I}\mathrm{ker}f_i^\ast=0$ since we only have the containment $\mathrm{Thick}\left(\mathbf{prod}\left(\bigcup_{i\in \mathcal{I}}(f_i)_*(\mathcal{S}_i)\right)\right)\subseteq (\bigcap_{i\in \mathcal I}\mathrm{ker}f_i^\ast)^\bot$. It is a consequence of the fact that, under the condition, contains all pure injective objects that we can conclude that $(\bigcap_{i\in \mathcal I}\mathrm{ker}f_i^\ast)^\bot=\mathcal{T}$.
\end{remark}

\begin{remark}
    One might, in fact, attempt to generalize the definition of a pure descendable family by considering the pure-closure of $\mathrm{Colocid}(\bigcup_{i\in \mathcal I}(f_i)_\ast\S_i)$, instead of  the  pure-closure of $\mathrm{Thick}\left(\mathbf{prod}(\bigcup_{i\in \mathcal I}(f_i)_\ast\S_i)\right)$.
    Note that we have an inclusion 
    \[
    \mathbf{pure}^\Delta_\Pi\left(\bigcup_{i\in \mathcal I}(f_i)_\ast\S_i\right)\subset \mathbf{pure}\left(\mathrm{Colocid}\left(\bigcup_{i\in \mathcal I}(f_i)_\ast\S_i\right)\right)
    \]
    It is straightforward to verify that the same arguments as in the proof of  \cref{prop:pure-desendable} still yield the joint conservativity of the family of functors. However, the difficulty with this alternative definition is that we do not know how to verify in practice that 
    \[
\mathbf{pure}\left(\mathrm{Colocid}\left(\bigcup_{i\in \mathcal I}(f_i)_\ast\S_i\right)\right)
    \]
    is a tensor-ideal, even when $\mathcal I$ is finite. 
\end{remark}

\begin{remark}\label{rec-weakly}
  Our motivation for introducing \textit{pure descendability} is to formulate a more flexible version of \textit{weak descendability}, one that still guarantees joint conservativity but also allows for arbitrary products. More precisely, recall that a family of geometric functors $(f_i^*\colon \mathcal{T} \to \mathcal{S}i)_{i\in \mathcal{I}}$ is weakly descendable if 
  \[
  \mathbb{1}_\mathcal{T}\in \mathrm{Locid}\left( (f_i)_*(\mathbb{1}_{\mathcal{S}_i})\mid i\in \mathcal{I}\right).
  \]
  We refer to \cite{barthel2024homological} and \cite{Gom25} for further details on weak descendability. The key difference is that pure descendability dispenses with closure under arbitrary fibers and cofibers, while permitting products.
\end{remark}

\begin{remark}
  The precise relationship between pure and weak descendability remains unclear. We expect pure-descendability to be easier to verify in certain contexts than weak descendability. At least in the following situation we can say more: assume $\mathrm{Loc}(f_i^\ast\T)=\S_i$ for each $i\in\mathcal{I}$. In this case 
  \[
  \mathrm{Locid}((f_i)_*(\mathbb{1}_{\mathcal S_i})\mid i\in \mathcal I)=\mathrm{Locid}((f_i)_*(\mathcal{S}_i)\mid i\in \mathcal I).
  \] 
  Hence, we have the following implications: 
 \begin{enumerate}
     \item If $\T=\mathrm{Thick}(\bigcup_{i\in \mathcal{I}}(f_i)_*(\mathcal{S}_i))$, then $(f_i^\ast)_{i\in \mathcal I}$ is a family of weakly descendable functors. Indeed, we have 
  \[\mathbf{pure}^\Delta_\Pi\left(\bigcup_{i\in \mathcal I}(f_i)_\ast\S_i\right)=\mathrm{Thick}\left(\bigcup_{i\in \mathcal I}(f_i)_\ast\S_i\right)\subseteq \mathrm{Locid}\left(\bigcup_{i\in \mathcal I}(f_i)_\ast\S_i\right).\]
  In other words, under these assumptions we get that pure descendability implies weak descendability. 
  \item Suppose that $\mathcal{I}$ is finite, and each functor $f^\ast_i$ is \'etale for $i \in \mathcal{I}$, that is, $(f_i)_\ast$ preserves compact objects. If $\mathrm{Loc}((f_i)_*(\mathcal{S}_i)\mid i\in \mathcal I)=\T$, then  $(f_i^\ast)_{i\in \mathcal I}$ is a family of pure descendable functors. Indeed, in this case we get  
  \[
  \mathbb{1}_\T\in \mathrm{Thick}((f_i)_*(\mathbb{1}_{\mathcal{S}_i})\mid i\in \mathcal I)\subseteq \mathrm{Thick}((f_i)_*({\mathcal S_i})\mid i\in \mathcal I)
  \]
  and the latter is a $\otimes$-ideal by our assumption on $\mathcal{I}$ and the projection formula for geometric functors. 
 \end{enumerate} 
\end{remark}

%%-----------------

\section{Pure monomorphisms for sequential limits of ring spectra}

In this section, we explain how to approach pure descendability for a family of geometric functors arising from a sequential limit of ring spectra. In particular, our goal is to describe a method for showing that the monoidal unit lies in the pure closure of the union of the images of the right adjoints in the family. To this end, we first give a general description of the setting of interest.

\begin{remark}\label{rem:strategyvanishinglimits}
    Let $(f^*_i\colon \mathcal T\to \mathcal{S}_i)_{i\in \mathcal{I}}$ be a family of geometric functors between rigidly-compactly generated tt-categories. Suppose that $\mathbb{1}_\T$ is described as a limit, $\mathbb{1}_\T\simeq \lim_\mathcal{J} t_i$ where $t_i\in (f_i)_*(\mathcal{S}_i)$. When is the morphism $\mathbb{1}_\T\to \prod_{\mathcal{J}}t_i$ a pure monomorphism? Given $c\in \mathcal{T}^c$, there is an equivalence 
    \[
    \Hom_\T(c,\mathbb{1}_\T)\simeq \lim_\mathcal{J} \Hom_\T(c,t_i).
    \]
    To compute the homotopy groups of the mapping space, there is a spectral sequence converging to $\Hom_\T^*(c,\mathbb{1}_\T)$ with $E_2$-term given by $\lim^j \Hom_\T^*(c,t_i)$. The strategy is to find conditions under which $\lim^j \Hom_\T^*(c,t_i)=0$ for $j>0$, since then 
    \[\Hom^*(c,\mathbb{1}_\T)\cong \lim_\mathcal{J} \Hom_\T^*(c,t_i)\hookrightarrow \prod \Hom_\T^*(c,t_i).\] 
    One can conclude that 
    \[
    \mathbb 1_\T\in \mathbf{pure}_\Pi\left(\bigcup_{i\in \mathcal I}(f_i)_\ast\S_i\right).
    \]
\end{remark}

More concretely, let us consider the following setting.

\begin{recollection}\label{rec-setting arbitrary lim}
    Let $F\colon \mathcal I\to \mathrm{CAlg}(\Pr^L_\mathrm{st})$ be a diagram such that $F(i)$ is rigidly-compactly generated tt-category for each $i\in \mathcal I$,  and let $\S\coloneqq \lim_{\mathcal{I}} F$. Let $\T$ be a rigidly-compactly generated tt-category and assume that there is family of geometric functors $f_i^\ast\colon \T\to F(i)$ which assembles into a cone of the functor $F$. Then the canonical comparison functor   
    $f^\ast\colon\T\to \S$
   has a right adjoint $f_\ast$. Moreover, the unit of this adjunction can be described as  the map  
   \[
   x\to \varprojlim_{\mathcal{I}} (f_i)_\ast (f_i^\ast(x)) 
   \]
   induced by the unit of $x\to (f_i)_\ast (f_i^\ast(x))$ of each adjunction $(f^\ast_i,(f_i)_\ast)$. We refer to \cite[Theorem B]{HY17} for further details. 
\end{recollection}

\begin{example}\label{ex-IndSdual and S}
    In the context of  \cref{rec-setting arbitrary lim}, a natural candidate to consider is $\T=\mathrm{Ind}(\S^{\mathrm{dual}})$. 
\end{example}

We need to recall some terminology. 

\begin{recollection}\label{rec:End-finite}
   Let $\mathcal{K}$ be a essentially small tt-category. Following \cite{Lau}, we say that $\mathcal{K}$ is End-finite if for each $x\in \mathcal{K}$, $\mathrm{End}^*_\mathcal{K}(x)$ is Noetherian. 
   
   In fact, $\mathcal{K}$ is End-finite if and only if $R=\mathrm{End}^*_\mathcal{K}(\mathbb{1})$ is Noetherian and  $\mathrm{End}^*_\mathcal{K}(x)$ is a finitely generated $R$-module for all $x\in \mathcal{K}$. This implies that $\mathrm{Hom}^*_\mathcal{K}(x,y)$ is a finitely generated $R$-module for all $x,y\in \mathcal{K}$ since $\mathrm{Hom}^*_\mathcal{K}(x,y)$ is a direct summand of $\mathrm{End}^*_\mathcal{K}(x\oplus y)$. 
   
   In general, a rigidly-compactly generated tt-category $\mathcal{T}$ is Noetherian (or End-finite) if $R=\mathrm{End}_\mathcal{T}^*(\mathbb{1})$ is graded Noetherian and the module $\mathrm{End}_\mathcal{T}^*(c)$ is a finitely generated $R$-module for any compact object $c$ in $\T$. In other words, $\T$ is End-finite if $\T^c$ is End-finite. 
\end{recollection}

\begin{recollection}\label{rec:Monogenic end-finite}
Let $\T$ be a rigidly-compactly generated tt-category such that  $\T=\mathrm{Loc}(\mathbb 1)$. By \cite{Lau} we get that if $\mathrm{End}^\ast_\T(\mathbb{1})$ is Noetherian, then  $\T$ is End-finite. 
\end{recollection}

\begin{proposition}\label{prop:pure-mono in Artinian}
Let $k$ be commutative Artinian ring. Let $\T$ be a rigidly-compactly generated $k$--linear tt-category. Let  $F\colon \mathbb N \to \mathrm{CAlg}(\Pr^L_\mathrm{st})$ be a diagram of rigidly-compactly generated $k$--linear tt-categories such that $F(n)$ is End-finite and $\mathrm{End}^\ast_\T(\mathbb 1)$ is a finitely generated $k$-algebra, for all $n\in \mathbb N$. 

Assume that there is a family of geometric functors $f^\ast_i\colon \T\to F(i)$ which are compatible with the transition maps. Consider the induced adjunction $f^\ast\colon \T\leftrightarrows \lim_{\mathbb N} F\colon f_\ast$ as in  \cref{rec-setting arbitrary lim}.  Then the canonical map 
\[
\mathbb 1_\T \xrightarrow[]{\varphi} \prod_{n\in \mathbb N} (f_n)_\ast(\mathbb 1_n) 
\]
is a pure monomorphism provided that the unit $\mathbb{1}\to f_\ast f^\ast\mathbb{1}$ is an equivalence. 
\end{proposition}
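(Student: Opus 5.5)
Following the strategy of \cref{rem:strategyvanishinglimits}, I would start from the hypothesis that the unit $\mathbb 1_\T\to f_\ast f^\ast\mathbb 1_\T$ is an equivalence. By \cref{rec-setting arbitrary lim} this identifies $\mathbb 1_\T$ with the sequential limit $\lim_{n\in\mathbb N}(f_n)_\ast(\mathbb 1_n)$, using $f_n^\ast\mathbb 1_\T\simeq \mathbb 1_n$. Under this identification $\varphi$ is the canonical map from the limit to the product. Since the index category is $\mathbb N$, there is a Milnor fiber sequence
\[
\lim_n (f_n)_\ast\mathbb 1_n\to \prod_n (f_n)_\ast\mathbb 1_n\xrightarrow{1-\mathrm{sh}}\prod_n (f_n)_\ast\mathbb 1_n .
\]
Fix a compact $c\in\T^c$ and apply $\T^\ast(c,-)$. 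Graded homomorphisms out of $c$ commute with products, so the long exact sequence yields the short exact Milnor sequence
\[
0\to {\lim}^1_n\, \T^{\ast-1}(c,(f_n)_\ast\mathbb 1_n)\to \T^\ast(c,\mathbb 1_\T)\to \lim_n \T^\ast(c,(f_n)_\ast\mathbb 1_n)\to 0 .
\]
The map $\h(\varphi)(c)$ factors as the surjection onto $\lim_n$ followed by the inclusion $\lim_n\hookrightarrow\prod_n$. So $\varphi$ is a pure monomorphism as soon as the $\lim^1$ term vanishes for every $c\in\T^c$.

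To prove the vanishing, use adjunction to rewrite $\T^\ast(c,(f_n)_\ast\mathbb 1_n)\cong F(n)^\ast(f_n^\ast c,\mathbb 1_n)$. Here $f_n^\ast c$ is compact, since geometric functors preserve compacts. Because $F(n)$ is End-finite, this group is finitely generated over the graded ring $\mathrm{End}^\ast_{F(n)}(\mathbb 1_n)$. I would then show that each graded piece is a finite-length $k$-module. For this I would use the $k$-linearity together with finiteness of $\mathrm{End}^\ast(\mathbb 1_n)$ over $k$ in each degree; this is the intended reading of the finiteness hypothesis, matching $\pi_\ast R_n$ finitely generated over $k$ in \cref{thm-seqlim}. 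Once each degree-wise term is a finite-length module over the Artinian ring $k$, the inverse system $\{\T^{m}(c,(f_n)_\ast\mathbb 1_n)\}_n$, for each fixed degree $m$, is a tower of Artinian $k$-modules. Its $k$-linear transition maps come from the compatibility of the $f_n^\ast$ with the transition functors. Such a tower satisfies the Mittag-Leffler condition, because the descending chain of images in each term stabilizes. Hence $\lim^1=0$ degreewise, and $\h(\varphi)$ is a monomorphism.

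The main obstacle I expect is the finiteness step. End-finiteness only gives finite generation over $\mathrm{End}^\ast(\mathbb 1_n)$, which may be infinite in total. So one must argue degreewise: a finitely generated graded module over a graded ring that is degreewise finite over $k$, and, say, bounded in one direction, has finite-length graded pieces. If needed I would impose connectivity or boundedness of the grading, as holds for cochain algebras, to get this. A secondary point is checking that the identification of $\mathbb 1_\T$ with the limit is compatible with $\varphi$, so that the Milnor sequence really computes $\h(\varphi)$. This follows from the description of the unit in \cref{rec-setting arbitrary lim} as being induced by the individual units $\mathbb 1_\T\to (f_n)_\ast f_n^\ast \mathbb 1_\T$.
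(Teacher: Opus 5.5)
Your proposal is correct and follows the paper's proof step for step: identify $\mathbb 1_\T$ with $\lim_n (f_n)_\ast\mathbb 1_n$, use the Milnor fiber sequence to reduce injectivity of $\h(\varphi)(c)$ to the vanishing of $\lim^1$, rewrite the terms by adjunction as $\mathrm{Hom}^\ast_{F(n)}(f_n^\ast c,\mathbb 1_n)$, and conclude by Mittag-Leffler for towers of finite-length $k$-modules. Your caution at the finiteness step is justified, and your reading of the hypothesis, namely degreewise finiteness of $\mathrm{End}^\ast(\mathbb 1_n)$ over $k$, is what makes the argument work; the paper deduces degreewise finite generation over $k$ from Noetherianity of $\mathrm{End}^\ast(\mathbb 1_n)$ alone, which is not enough (e.g.\ $k[x]$ with $|x|=0$), and once the ring is degreewise finite no boundedness is needed, since each graded piece of a finitely generated graded module is a quotient of a finite sum of graded pieces of the ring.
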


\begin{proof}
First, by the description of the unit of the adjunction $(f^\ast,f_\ast)$, we obtain that the monoidal unit $\mathbb 1_\T$ is a sequential limit of the right adjoints $(f_n)_\ast$, which in turn allows to describe it  as the fiber sequence  
\begin{equation}\label{eq- 1 as a fiber}
    \mathbb{1}_\T \xrightarrow{\varphi} \prod_{n\in \mathbb N} (f_n)_\ast(\mathbb 1_n)  \xrightarrow[]{(\mathrm{id}-f_{n,n+1}^\ast)} \prod_{n\in \mathbb N} (f_n)_\ast(\mathbb 1_n) 
\end{equation}
where $f_{n,n+1}^\ast$ denote the transition functors of the sequential diagram.

Now, in order to check that $\varphi$ is a pure monomorphism, we need to verify that for any compact  object  $x \in \T$, the induced morphisms in homotopy groups
\[\pi_m\mathrm{Hom}_\T(x,\mathbb{1}_\T)\xrightarrow[]{\varphi_\ast} \pi_m\mathrm{Hom}_\T\left(x,\prod_{n\in \mathbb{N}} (f_n)_\ast(\mathbb 1_n) \right)\] are injective for all $m\geq 0$.  Since the functor $\mathrm{Hom}_\T(x,-)$ preserves fiber sequences and products, we can apply it to the fiber sequence (\cref{eq- 1 as a fiber}) to obtain $\mathrm{Hom}_\T(x, \mathbb{1}_\T)$ as a fiber sequence
\begin{center}
    \begin{tikzcd}
\mathrm{Hom}_\T(x, \mathbb{1}_\T) \arrow[r,"\varphi_\ast"]
&  \prod_{n\in \mathbb{N}} \mathrm{Hom}_\T(x,(f_n)_\ast(\mathbb 1_n)) \arrow[d, phantom, ""{coordinate, name=Z}]
\arrow[d,
rounded corners,"(\mathrm{id}-i^\ast)_\ast",
to path={ -- ([xshift=4ex]\tikztostart.east)
 |- (Z) [near start]\tikztonodes
-|([xshift=-4ex]\tikztotarget.west)
-- (\tikztotarget)}]
 \\
&  \prod_{n\in \mathbb{N}} \mathrm{Hom}_\T(x,(f_n)_\ast(\mathbb 1_n)).
\end{tikzcd}
\end{center}
where $i^\ast$ denotes the morphism induced by the $f_{n,n+1}^\ast$. Hence, from the long exact sequence induced on homotopy groups by the previous fiber sequence, we reduce our problem to verify that the cokernel of the map \[\prod_{n\in \mathbb{N}}\pi_m \mathrm{Hom}_\T(x,(f_n)_\ast(\mathbb 1_n))\xrightarrow[]{(\mathrm{id}-i^\ast)_\ast} \prod_{n\in \mathbb{N}}\pi_m\mathrm{Hom}_\T(x,(f_n)_\ast(\mathbb 1_n))\]
is trivial, for all $m\in \mathbb Z$. But such cokernel corresponds to the computation of the first higher limit $\lim^1$ for the tower of abelian groups \[\ldots\rightarrow \pi_m\mathrm{Hom}_\T(x,(f_2)_\ast(\mathbb 1_2)))\xrightarrow[]{f_1} \pi_m\mathrm{Hom}_\T(x,(f_1)_\ast(\mathbb 1_1)))\] 
where the morphisms $f_n$ are induced by the transition maps $F(n)\to F(n+1)$. We will conclude by showing that this is a tower of finitely generated $k$--modules and $k$--linear maps: then it satisfies the Mittag-Leffler condition, which implies that $\lim^1$ vanishes  under these finiteness conditions. 

By assumption, the categories $F(n)$ are End-finite and $f^\ast_n(x)$ is compact, then  
 \[
 \mathrm{Hom}^\ast_\T(x,(f_n)_\ast(\mathbb 1_n))\simeq \mathrm{Hom}^\ast_{F(n)}(f^\ast_nx,\mathbb 1_n)
 \]
 is a finitely generated $\mathrm{End}^\ast(\mathbb{1}_n)$-module; in particular, it is a finitely generated $k$--module in each degree since  $\mathrm{End}^\ast(\mathbb{1}_n)$ is Noetherian. 
\end{proof}

\begin{corollary}\label{coro-puremono-seqlimit}
   Let $k$ be a commutative Artinian ring. Let $R$ be a commutative ring spectrum. Suppose that 
   \[
   R\simeq \lim_{n\in \mathbb N } R_n
   \]
   in $\mathrm{CAlg}$, with $\pi_\ast R_n$ finitely generated over $k$. Then the canonical map
   \[
   R\to \prod_{n\in \mathbb N} R_n 
   \]
   is a pure monomorphism in $\Mod_R$.  
\end{corollary}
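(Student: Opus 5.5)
We plan to deduce the statement from \cref{prop:pure-mono in Artinian}, or more directly by rerunning its proof in the special case $\T=\Mod_R$. The first step is to set up the maps. Each $R_n$ is an $R$-algebra via the structure map $R\to R_n$ of the limit cone. Hence base change gives geometric functors $f_n^\ast=R_n\otimes_R-\colon \Mod_R\to \Mod_{R_n}$, compatible with the transition maps, and their right adjoints are the forgetful functors. So $(f_n)_\ast(\mathbb 1_n)=R_n$ regarded as an $R$-module. The sequential limit $\lim_n R_n$ is computed in $\mathrm{CAlg}$, and limits there agree with limits of underlying spectra and hence of underlying $R$-modules. Therefore $R\simeq\lim_n R_n$ in $\Mod_R$. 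This gives the Milnor fiber sequence
\[
R\xrightarrow{\varphi}\prod_{n} R_n\xrightarrow{\mathrm{id}-\mathrm{sh}}\prod_n R_n,
\]
which is exactly \cref{eq- 1 as a fiber}. The unit condition of \cref{prop:pure-mono in Artinian} is not needed separately, because we use this presentation of $R$ as a limit directly.

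Next, we test purity on compact objects. Let $x\in\Mod_R^c=\mathrm{Thick}(R)$. Applying $\Hom_R(x,-)$ to the fiber sequence and passing to the long exact sequence in homotopy reduces the claim to the vanishing of
\[
\lim\nolimits^1_n\,\pi_m\Hom_R(x,R_n)\quad\text{for all } m.
\]
By the Mittag-Leffler argument used in the proof of \cref{prop:pure-mono in Artinian}, it suffices to show that each $\pi_m\Hom_R(x,R_n)$ is a finitely generated $k$-module, that is, of finite length. The $k$-linearity of the transition maps comes from the $R_n$ being $k$-algebras and the maps being $k$-algebra maps. With finite length in hand, the images of the tower stabilize, since descending chains of submodules of a finite length module terminate, and so $\lim^1=0$.

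The main point is the finiteness. We do not invoke End-finiteness of $\Mod_{R_n}$. Instead we argue by thick subcategory induction on $x$, with $n$ and $m$ fixed. Consider the class of compact $x$ such that $\pi_m\Hom_R(x,R_n)$ is a finitely generated $k$-module for all $m$. For $x=\Sigma^jR$ we get $\pi_{m+j}R_n$, which is finitely generated by hypothesis. The class is closed under suspensions and retracts. It is also closed under cofibers: a long exact sequence sandwiches the middle term between a quotient of a finitely generated module and a submodule of a finitely generated module, and these are finitely generated because $k$ is Noetherian. Hence the class is all of $\mathrm{Thick}(R)=\Mod_R^c$. Here one should read ``$\pi_\ast R_n$ finitely generated over $k$'' as degreewise finite generation, and degreewise finiteness is all the argument uses.

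Combining the two steps gives $\lim^1=0$ in every degree, so each $\pi_m\Hom_R(x,R)\to\prod_n\pi_m\Hom_R(x,R_n)$ is injective. Therefore $\varphi$ is a pure monomorphism. The only step needing care is the identification of the $\mathrm{CAlg}$-limit with the module limit, and that holds because the forgetful functors preserve limits.
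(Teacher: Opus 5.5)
Your proof is correct. Its skeleton is the one in the paper's proof of \cref{prop:pure-mono in Artinian}, of which the corollary is the special case $\T=\Mod_R$, $F(n)=\Mod_{R_n}$: the Milnor fiber sequence, the reduction to $\lim^1_n \pi_m\Hom_R(x,R_n)=0$ for compact $x$, and Mittag-Leffler via finite length over the Artinian ring $k$. The unit condition there is exactly $R\simeq\lim_n R_n$ in $\Mod_R$.

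Where you genuinely differ is the finiteness input.

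\textbf{The paper's route.} It uses End-finiteness of $\Mod_{R_n}$, which requires (and, by \cref{rec:Monogenic end-finite}, amounts to) $\pi_\ast R_n$ being graded Noetherian. Then $\Hom^\ast_{R_n}(R_n\otimes_R x,R_n)$ is a finitely generated $\pi_\ast R_n$-module. The paper then passes to finite generation over $k$ in each degree. That step is only valid if $\pi_\ast R_n$ is itself degreewise finitely generated over $k$, and Noetherianity does not give this.

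\textbf{Your route.} Your thick-subcategory induction from $R$ uses $\Mod_R^c=\mathrm{Thick}(R)$ and that $k$ is Noetherian. It needs exactly that degreewise finiteness and nothing else: no Noetherian hypothesis on $\pi_\ast R_n$ and no appeal to Lau.

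\textbf{Your reading of the hypothesis matters.} Suppose ``finitely generated over $k$'' meant ``finitely generated $k$-algebra'', as the paper's application to $H^\ast(BG_n;R)$ phrases it. Then the statement fails. Take $k$ a field and the tower of discrete rings $R_n=k[t]$ with transition maps $t\mapsto t^2$. Mittag-Leffler fails, and the constant sequence $(t,t,\ldots)$ is not in the image of $\mathrm{id}-\mathrm{sh}$. Hence $\pi_{-1}R=\lim^1_n k[t]\neq 0$, while $\pi_{-1}R_n=0$, so $\h(\varphi)(\Sigma^{-1}R)$ is not injective.

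\textbf{What each route buys.} The paper's route is set up for the general situation of \cref{prop:pure-mono in Artinian}. There the compact objects of $\T$ need not be built from the unit, and End-finiteness of the targets replaces your induction. Your route is more elementary and needs weaker hypotheses on $\pi_\ast R_n$, but it relies on $\Mod_R$ being generated by its unit. For the intended application in \cref{prop:pure mono}, your reading is satisfied, since each $H^m(BG_n;R)$ is finitely generated over $R$.
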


We thank Isaac Bird for the following observation:

\begin{example}\label{ex-completelocal}
    Let $R$ be a complete Noetherian local commutative ring with maximal ideal $\mathfrak{m}$. Recall that in this case 
    \[
    R\simeq \lim_{n\in \mathbb{N}} R/\mathfrak{m}^n. 
    \]
    Moreover, for every $n\geq1$, the quotient $R/\mathfrak{m}^n$ is a finite dimensional $R/\mathfrak{m}$-vector space. Viewing the rings $R$ and $R/\mathfrak{m}^n$ as commutative ring spectra via Eilenberg-MacLane spectra, we are in the setting of  \cref{coro-puremono-seqlimit}. Consequently, the canonical map
    \begin{equation}\label{eq-completelocal}
        R\to \prod_n R/\mathfrak{m}^n
    \end{equation} 
    is a pure monomorphism. Furthermore, by \cite[Theorem 3]{Warfield69}, the ring $R$ is a pure injective object in the category of discrete $R$–modules, and hence also pure injective in $\Mod_R$. It follows that the map in \eqref{eq-completelocal} splits. Therefore, $R$ is a direct summand of $\prod_{n} R/\mathfrak m^{n}$. Similar phenomena have been studied in \cite{Lar03}. 
\end{example}

\begin{remark}
   We have provided sufficient conditions to ensure that the monoidal unit $R$ lies in $  \mathbf{pure}^\Delta_\Pi\left(\bigcup_{i\in \mathcal I}(f_n)_\ast\Mod_{R_n}\right)$. The next natural step is to establish conditions under which this subcategory is also closed under tensor products. Such a result would imply that the family of geometric functors $(f_n^\ast)_{n\in \mathbb{N}}$ is pure descendable, and hence jointly conservative by   \cref{prop:pure-desendable}. We address this question in the following section in the setting of cochain algebras of an Artinian locally finite group. In contrast, we do not know whether the corresponding property should be expected to hold in the context of  \cref{ex-completelocal}.  
\end{remark}

\section{Pure descendability for cochains on classifying spaces}\label{sec:puremono_classifying spaces}

In this section, we investigate the following situation. Let $G$ be a group, and $\mathcal F$ a family of subgroups. The goal of this section is to study the family of functors $(\textrm{Ind}_H^G\colon \mathrm{Mod}_{ C^*(BG;R)}\rightarrow \mathrm{Mod}_{C^*(BH;R)})_{H\in \mathcal{F}}$ and find conditions under which the morphism $C^*(BG;R)\to \prod_{H\in \mathcal{F}} {C^*(BH;R)}$ in $\mathrm{Mod}_{ C^*(BG;R)}$ is a pure monomorphism. We focus on a particular situation: locally finite Artinian groups.

\begin{notation}
    Let $G$ be a group and $R$ a commutative ring spectrum. We write $BG$ to denote the classifying space of $G$, and $C^\ast(BG;R)=F(\Sigma^\infty_+BG,R)$ to denote the function spectrum of $R$--valued cochains on $BG$, and $\mathrm{Mod}_{C^\ast(BG;R)}$ to denote the category of module spectra over the ring spectrum $C^\ast(BG;R)$. If $R=Hk$ where $k$ is a field of positive characteristic $p$, we simply write $C^\ast(BG;k)$.
    
    For a subgroup $H \leq G$, then the induced ring morphism $C^\ast(BG;R)\to C^\ast(BH;R)$ gives a structure of $C^\ast(BG;R)$-module to $C^\ast(BH;R)$. 
    
    We denote by $\textrm{Res}_H^G\colon \mathrm{Mod}_{C^\ast(BH;R)} \to \mathrm{Mod}_{C^\ast(BG;R)}$ the restriction functor. This functor admits a left adjoint $\textrm{Ind}_H^G\colon \mathrm{Mod}_{ C^*(BG;R)}\rightarrow \mathrm{Mod}_{C^*(BH;R)}$, referred to as induction functor. 
\end{notation} 

\begin{notation}
   For simplicity, we write $\textrm{Hom}_G(-,-)\coloneqq \textrm{Hom}_{\Mod_{C^\ast(BG;R)}}(-,-)$.
\end{notation}

\begin{example}
     Let $k$ be a field of characteristic $p$, and let $G$ be a finite group and $S$ be a Sylow $p$-subgroup of $G$. Then $C^\ast(BG;k)\rightarrow C^*(BS;k)$ is a pure monomorphism. Indeed, this follows using the transfer which satisfies Frobenius reciprocity. 
\end{example}

\begin{recollection}\label{rec: tree}
Let $G$ be locally finite countable group expressed as $G=\cup_{n\in \mathbb{N}} G_n$ for an ascending chain of finite subgroups $\cdots \subset G_n\subset G_{n+1}\subset \cdots$ of $G$. Then $G$ acts simplicially on a tree $T$ with finite isotropy groups; in fact, the isotropy subgroups for this action correspond precisely to the subgroups $G_n$. Moreover, the fundamental domain for the action can be identified with a ray  (e.g., see \cite[Section 5]{gomez2024picard}). This gives rise to a graph of finite groups which can depicted as follows: 

\centerline{\xymatrix{ 
\ddots \ar@{<-}[rd]^{i}  & &  G_2 \ar@{<-}[rd]^{i} \ar@{<-}[ld]_{\textrm{Id}}  & & G_1 \ar@{<-}[ld]_{\textrm{Id}}   \\ & G_2   & &  G_1  &  
}}

\noindent where $i$ denotes the inclusions $G_n\hookrightarrow G_{n+1}$. In fact, $G$ corresponds to the fundamental group of the above graph of groups in the terminology of Bass-Serre theory.   
\end{recollection}

\begin{recollection}
    \label{rec: structure l.f. Artinian group}
    Let $G$ be locally finite Artinian group (e.g. a discrete $p$-toral group). Following the arguments in the proof of \cite[Proposition 1.2]{BLO07}, $G$ fits into an extension 
    \begin{equation}
        1\to A \to G \to \pi \to 1
    \end{equation}
    where $\pi$ is a finite group and $A$ is a finite product of groups of the form $\mathbb{Z}/q^n$ with $q$ a prime number and $1\leq n\leq \infty$. Note that $A$ is countable and therefore locally finite Artinian groups are also countable and can be expressed as $G=\cup_{n\in \mathbb{N}} G_n$ with $G_n$ finite groups. 
    
    Moreover, they satisfy finiteness conditions with respect to conjugacy classes of certain subgroups. Given $N>0$ there is a finite set of conjugacy classes of subgroups of order $N$ and they are of finite $p$-rank for every prime $p$, (see the proof of \cite[Lemma 1.4]{BLO07}). In particular, there are finitely many conjugacy classes of elementary abelian subgroups.
\end{recollection}

\begin{proposition}\label{decomposition of C*BS}
   Let $R$ be a commutative ring spectrum.  Let $G$ be a countable locally finite group acting on a tree $T$ as in  \cref{rec: tree}. 
    Then $C^\ast(BG;R)$ agrees with the homotopy equalizer 
 \begin{equation}\label{eq: eq}
     C^\ast(BG;R)\stackrel{\simeq}{\rightarrow}\mathrm{eq}\left( \prod_{n\in \mathbb{N}}C^\ast(BG_n;R)\rightrightarrows \prod_{n\in \mathbb{N}}C^\ast(BG_n;R)\right)
 \end{equation}
 in the category $\Mod_{C^\ast(BG;R)}$ where the arrows are induced by identity maps $G_n\to G_n$ and from the inclusion $G_n \hookrightarrow  G_{n+1}$, where we view $C^\ast(BG_n;R) \in \Mod_{C^\ast(BG;R)}$ via ${C^\ast(BG;R)}\to C^*(BG_n;R)$. Equivalently, $C^\ast(BG;R)$ is the sequential limit of 
 \[
 \cdots \to C^*(BG_{n+1};R) \rightarrow C^*(BG_{n};R) \to \cdots\ldots\to C^*(BG_2;R)\to C^*(BG_1;R). 
 \]
\end{proposition}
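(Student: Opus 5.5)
The plan is to realize $BG$ as a homotopy colimit coming from the action of $G$ on the tree $T$ of \cref{rec: tree}, and then apply the contravariant functor $C^\ast(-;R)=F(\Sigma^\infty_+(-),R)$, which turns homotopy colimits of spaces into homotopy limits of spectra. Since $T$ is contractible, the Borel construction $EG\times_G T$ is a model for $BG$. The tree has a $1$-dimensional fundamental domain, a ray, so $T$ is obtained from the $G$-sets of vertices $\coprod_n G/G_n$ and edges $\coprod_n G/G_n$ by a pushout (equivalently, a coequalizer) of $G$-spaces. Here the edge joining the vertices with isotropy $G_n$ and $G_{n+1}$ has isotropy $G_n\cap G_{n+1}=G_n$. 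Taking Borel constructions, and using $EG\times_G G/H\simeq BH$, gives a homotopy coequalizer
\[
BG\simeq \mathrm{hocoeq}\Big(\coprod_{n} BG_n \rightrightarrows \coprod_n BG_n\Big),
\]
where the two maps are induced by the identity $G_n\to G_n$ and the inclusion $G_n\hookrightarrow G_{n+1}$.

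Alternatively, and more directly, I would use that $G=\bigcup_n G_n$ is a sequential union, so $BG\simeq \mathrm{hocolim}_n BG_n$. This holds because $B(-)$ commutes with filtered colimits of groups, for instance via the bar construction, which is levelwise a filtered colimit of sets; filtered colimits of simplicial sets are homotopy colimits. The standard telescope presentation of a sequential homotopy colimit is exactly the coequalizer above. I would present both descriptions and note that they coincide, the tree being a geometric realization of the mapping telescope.

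Next I apply $F(\Sigma^\infty_+(-),R)$. It sends coproducts to products and homotopy coequalizers to homotopy equalizers, so we get the equivalence \eqref{eq: eq} together with the equivalent sequential limit description $C^\ast(BG;R)\simeq\lim_n C^\ast(BG_n;R)$. Both are a priori equivalences of spectra, or of $R$-modules.

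To upgrade this to a statement in $\Mod_{C^\ast(BG;R)}$, I would observe that all maps in the diagram are restriction maps induced by group inclusions over $G$. The whole diagram therefore lies in $\mathrm{CAlg}$ under $C^\ast(BG;R)$, via the maps $BG_n\to BG$. Limits in commutative algebras, and in modules over a fixed ring, are computed on underlying spectra, and the forgetful functor $\Mod_{C^\ast(BG;R)}\to \mathrm{Sp}$ is conservative. Hence the canonical comparison map, which is $C^\ast(BG;R)$-linear, is an equivalence because it is one on underlying spectra.

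The main obstacle I expect is in this last step: making sure that the comparison map in \eqref{eq: eq} is the canonical module map and that the diagram is coherent as a diagram of $C^\ast(BG;R)$-modules, rather than just commuting up to homotopy. I would handle this by building the diagram functorially at the level of spaces over $BG$ (the functor $n\mapsto BG_n\to BG$) and applying $C^\ast$ as a functor into $\mathrm{CAlg}_{C^\ast(BG;R)/}$. Coherence is then automatic.
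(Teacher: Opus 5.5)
Your proposal is correct and follows essentially the same route as the paper. In both, the tree gives $BG\simeq T_{hG}$ as the homotopy coequalizer (equivalently the mapping telescope) of the $BG_n$, then $C^\ast(-;R)$ turns homotopy colimits into limits, and all maps are compatible with the $C^\ast(BG;R)$-module structures. Your extra step makes the paper's one-line remark about module maps precise: you apply $C^\ast(-;R)$ to the diagram of spaces over $BG$, landing in $\mathrm{CAlg}$ under $C^\ast(BG;R)$, and use that limits there are computed on underlying spectra.
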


\begin{proof}
The classifying space $BG$ is homotopy equivalent to the homotopy coequalizer 
\[
\mathrm{coeq}\left(\coprod_{n\in \mathbb{N}} BG_n \rightrightarrows \coprod_{n\in \mathbb{N}} BG_n \right)
\]
where one of the arrows is the identity and the other one is induced by the inclusions $G_n\hookrightarrow  G_{n+1}$.  This will follow by the existence of the tree $T$. Since $T$ is contractible, $BG\simeq (T)_{hG}$ which in turn can be described as a homotopy colimit. In fact, it is the telescope of $\cdots \to BG_n \rightarrow BG_{n+1} \to \cdots$; which in turn can be expressed as a coequalizer.

 The statement follows from the fact that $C^\ast(-;R)$ commutes with homotopy colimits. Moreover, all maps are $C^\ast(BG;R)$-module morphisms via the subgroup inclusions $G_n\subseteq G_{n+1}\subseteq G$; that is all arrows live in the category $\Mod_{C^\ast(BG;R)}$.
\end{proof}

Recall that a essentially small tt-category $\mathcal{K}$ is \textit{End-finite} if for each $x\in \mathcal{T}$, $\mathrm{End}^*_\mathcal{K}(x)$ is Noetherian. A rigidly-compactly generated tt-category $\T$ is \textit{End-finite} if $\T^c$ is End-finite; see  \cref{rec:End-finite} for a more detailed discussion.

\begin{example}\label{ex-mod is end-finite}
    Let $G$ be a finite group, and $R$ be a commutative Noetherian ring. Then  $\Mod_{C^\ast(BG;R)}$ is End-finite. Indeed, this follows by  \cref{rec:Monogenic end-finite} since 
    \[
    \mathrm{End}_{\Mod_{C^\ast(BG;R)}}^\ast({C^\ast(BG;R)})\simeq H^\ast(BG;R)
    \]
    which is a finitely generated $R$--algebra by a classical result of Evens and Venkov.
\end{example}

\begin{proposition}\label{prop:pure mono}
  Let $G$ be locally finite Artinian group expressed as $G=\bigcup_{n\in \mathbb{N}} G_n$ for an ascending chain of finite subgroups $\cdots \subset G_n\subset G_{n+1}\subset \cdots$ of $G$, and let $R$ be a commutative Artinian ring. Then the morphism  
   \[\varphi \colon C^\ast(BG;R)\to \prod_{n} \mathrm{Res}_{G}^{G_n}(C^*(BG_n;R))\] in $\mathrm{Mod}_{C^\ast(BG;R)}$ is a pure monomorphism. 
\end{proposition}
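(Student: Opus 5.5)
The plan is to deduce this from \cref{coro-puremono-seqlimit}, or more directly by rerunning the argument of \cref{prop:pure-mono in Artinian} with $\T=\Mod_{C^\ast(BG;R)}$, $k=R$ and $R_n=C^\ast(BG_n;R)$.

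First, \cref{decomposition of C*BS} gives the fiber sequence
\[
C^\ast(BG;R)\xrightarrow{\varphi}\prod_{n}C^\ast(BG_n;R)\xrightarrow{\mathrm{id}-i^\ast}\prod_n C^\ast(BG_n;R)
\]
in $\Mod_{C^\ast(BG;R)}$, where $i^\ast$ is induced by the inclusions $G_n\hookrightarrow G_{n+1}$. Fix a compact $x\in\Mod_{C^\ast(BG;R)}$ and apply $\Hom_G(x,-)$. This functor preserves fibers and products, so we obtain a long exact sequence on homotopy groups. Injectivity of $\pi_m\varphi_\ast$ for every $m$ is then equivalent to the vanishing of $\lim^1$ of the tower
\[
\cdots\to\Hom^m_G(x,C^\ast(BG_{n+1};R))\to\Hom^m_G(x,C^\ast(BG_n;R))\to\cdots
\]
in every degree, since that $\lim^1$ is the cokernel of $\mathrm{id}-i^\ast$ one degree up.

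Next, the induction--restriction adjunction identifies
\[
\Hom^\ast_G(x,\mathrm{Res}^{G_n}_G C^\ast(BG_n;R))\cong\Hom^\ast_{G_n}(\mathrm{Ind}^{G_n}_G x,C^\ast(BG_n;R)).
\]
Here $\mathrm{Ind}^{G_n}_Gx$ is compact, because induction is a geometric functor and so preserves compacts. By \cref{ex-mod is end-finite}, $\Mod_{C^\ast(BG_n;R)}$ is End-finite, since $R$ Artinian implies $R$ Noetherian and $H^\ast(BG_n;R)$ is a finitely generated $R$-algebra by Evens--Venkov. So this Hom group is a finitely generated graded $H^\ast(BG_n;R)$-module. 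Moreover $H^m(BG_n;R)$ is a finitely generated $R$-module in each degree. Hence each degree-$m$ piece of the tower is a finitely generated $R$-module, that is, an $R$-module of finite length.

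Finally, a tower of finite-length $R$-modules satisfies the Mittag-Leffler condition, because descending chains of images stabilize by the Artinian property. Therefore $\lim^1=0$ in every degree, and $\varphi$ is a pure monomorphism.

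The main point requiring care is the finiteness step, and in particular that the generation is degreewise over $R$ and not only over $H^\ast(BG_n;R)$. I will handle it by noting that a finitely generated graded module over a graded ring that is finitely generated as an algebra over $R$ in degree zero has finitely generated homogeneous pieces. This holds here because $H^\ast(BG_n;R)$ is connective and each $H^m$ is finitely generated. Since $x$ is compact, the module is also bounded below, so the degree shifts cause no problem.
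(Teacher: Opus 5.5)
Your proposal is correct and follows the paper's route. The paper's proof combines the sequential-limit decomposition of $C^\ast(BG;R)$ with End-finiteness of each $\Mod_{C^\ast(BG_n;R)}$ and then invokes \cref{coro-puremono-seqlimit}, whose proof (via \cref{prop:pure-mono in Artinian}) is the fiber-sequence, adjunction and Mittag--Leffler argument you rerun directly. Your justification of degreewise finite generation over $R$, using that $H^\ast(BG_n;R)$ is generated in nonnegative degrees with each $H^m$ finitely generated, is more careful than the paper, which at that step only cites Noetherianity of $\mathrm{End}^\ast(\mathbb 1_n)$.
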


\begin{proof} 
By  \cref{decomposition of C*BS}, we know that $C^\ast(BG;R)$ the  sequential limit of the $C^\ast(BG_n;R)$ and the obvious maps induced by inclusions of subgroups. By  \cref{ex-mod is end-finite}, each category $\Mod_{C^\ast(BG_n;R)}$ is End-finite since $\mathrm{End}^\ast(C^\ast(BG_n;R))=H^*(BG_n;R)$ is finitely generated as an $R$-algebra. Hence, the result follows from  \cref{coro-puremono-seqlimit}.
\end{proof}

\begin{recollection}\label{rec-abelem}
   Let $G$ be a finite group.  Then $C^\ast(BG;\mathbb F_p)$ lies in the thick subcategory generated by 
   \[
   \prod_{E\in \mathrm{Abelem}(G)} \mathrm{Res}^E_G C^\ast(BE;\mathbb F_p)
   \]
    where $\mathrm{Abelem}(G)$ denotes the set of elementary abelian subgroups of $G$. Indeed, this follows from the fact that the category $\Mod_{C^\ast(BG;\mathbb F_p)}$ can be identified  with the localizing subcategory generated by the monoidal unit in $\mathbf{K}(\mathrm{Inj}(\mathbb F_p G))$; see \cite[Remark 7.5]{BK08} and \cite[Theorem 3.1]{BG}. 
\end{recollection}

\begin{theorem}\label{thm-example of pure-des}
    Let $G$ be locally finite Artinian group expressed as $G=\bigcup_{n\in \mathbb{N}} G_n$ for an ascending chain of finite subgroups $\cdots \subset G_n\subset G_{n+1}\subset \cdots$ of $G$. Then the family of geometric functors 
    \[
    \left(\mathrm{Ind}_G^{G_n}\colon \Mod_{C^\ast(BG;\mathbb F_p)} \to \Mod_{C^\ast(BG_n;\mathbb F_p)} \right)_{n\in \mathbb N}
    \]
    is pure descendable. 
\end{theorem}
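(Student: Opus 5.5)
We need to show that $\mathcal{P}\coloneqq\mathbf{pure}^\Delta_\Pi\left(\bigcup_{n}\mathrm{Res}^{G_n}_G\Mod_{C^\ast(BG_n;\mathbb F_p)}\right)$ equals $\T=\Mod_{C^\ast(BG;\mathbb F_p)}$. By \cref{def-puredesc} it suffices to prove two things: that $\mathcal P$ is a tensor-ideal, and that it contains the unit $\mathbb 1=C^\ast(BG;\mathbb F_p)$. Indeed, once both hold, every $x\simeq x\otimes\mathbb 1$ lies in $\mathcal P$.

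For the unit, I would invoke \cref{prop:pure mono} with $R=\mathbb F_p$, which is Artinian. It says that $\varphi\colon \mathbb 1\to \prod_n \mathrm{Res}^{G_n}_G C^\ast(BG_n;\mathbb F_p)$ is a pure monomorphism. The target is a product of objects in the images of the right adjoints, so it lies in $\mathbf{prod}$ of the union. Hence $\mathbb 1\in\mathcal P$.

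The tensor-ideal property is the main obstacle, since $\mathcal I=\mathbb N$ is infinite and \cref{cor-pure=def} does not apply. I would proceed as follows.

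\begin{enumerate}
\item \emph{Reduce to a single kind of object.} Let $M_n\coloneqq\mathrm{Res}^{G_n}_G C^\ast(BG_n;\mathbb F_p)$. Since $\Mod_{C^\ast(BG_n;\mathbb F_p)}$ is generated by its unit, I expect $\mathcal P$ to be controlled by products of objects $M_n\otimes y$.
\item \emph{Handle tensoring with a general $x\in\T$.} Tensoring preserves pure monomorphisms (proof of \cref{lemma-pureclosure}), so it is enough to show that $\mathrm{Thick}(\mathbf{prod}(\cdots))$ is stable under $x\otimes-$, at least up to pure subobjects. The tensor product does not commute with infinite products for non-compact $x$. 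However, the canonical map $x\otimes\prod_j z_j\to\prod_j(x\otimes z_j)$ should be a pure monomorphism. To see this, check it on compact objects $c$: by rigidity, $\T(c,x\otimes -)$ is a filtered colimit over maps from compacts into $x$, so $x$ is a filtered (homotopy) colimit of compacts $x_\alpha$. For compact $x_\alpha$ the comparison map is an isomorphism. Filtered colimits of these maps then give injectivity on $\h$, because $\h$ turns filtered colimits into filtered colimits of groups, and $\varinjlim\prod\to\prod\varinjlim$ is injective for groups. Moreover, $\prod_j(x\otimes z_j)$ stays in the product-closure of the images by the projection formula, $x\otimes (f_n)_\ast y\simeq (f_n)_\ast(f_n^\ast x\otimes y)$.
\item \emph{Extend from products to thick closures.} Propagate this through cones and retracts. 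This is the delicate part: a pure subobject of a cone need not be the cone of pure subobjects.
\end{enumerate}

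If the third step resists, I would fall back on \cref{prop:pure_Pi is tensor ideal}. There, it would suffice to show that $\mathcal P$ is definable, and definability is checked against compact objects only, which commute with products. The fallback proceeds in two steps.

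\begin{itemize}
\item \emph{Collapse the infinite family to a finite one.} Use \cref{rec-abelem}: each $M_n$ lies in $\mathrm{Thick}$ of $\prod_{E\le G_n}\mathrm{Res}^E C^\ast(BE)$. By \cref{rec: structure l.f. Artinian group}, there are only finitely many $G$-conjugacy classes of elementary abelian subgroups, and these all sit inside some fixed $G_N$. Thus the relevant objects are, up to conjugation, restrictions from finitely many $E$.
\item \emph{Conclude via the finite case.} This would make $\mathrm{Thick}(\mathbf{prod}(\cdots))$ contain $\mathrm{Thick}$ of the finitely many $(f_E)_\ast$-images, combined with products. One then argues as in \cref{cor-pure=def}, using that each single right adjoint commutes with products.
\end{itemize}

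I expect this finite-reduction step, namely showing that conjugate restrictions $\mathrm{Res}^{gEg^{-1}}$ and $\mathrm{Res}^{E}$ give equivalent objects in $\T$, to be the heart of the argument.
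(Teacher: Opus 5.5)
Your treatment of the unit is exactly the paper's (\cref{prop:pure mono}). Your main route to the tensor-ideal property fails, because Step 2 uses a false lemma. For infinite products, $\varinjlim_\alpha\prod_j A_{\alpha j}\to\prod_j\varinjlim_\alpha A_{\alpha j}$ is not injective: a tuple whose $j$-th coordinate dies only at a stage $\alpha_j$, with $\alpha_j$ unbounded, survives on the left. Consequently $x\otimes\prod_j z_j\to\prod_j(x\otimes z_j)$ need not be a pure monomorphism. In $D(\mathbb{Z}_p)$ take $x=\mathbb{Q}_p$ and $z_j=\mathbb{Z}/p^j$. The target is $0$. The source contains $\mathbb{Q}_p\otimes\mathbb{Z}_p\neq 0$, via the diagonal $\mathbb{Z}_p\hookrightarrow\prod_j\mathbb{Z}/p^j$ and flatness of $\mathbb{Q}_p$. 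A pure monomorphism into $0$ forces its source to vanish, so the map is not one. This failure of the projection formula against infinite products is exactly why \cref{cor-pure=def} needs $\mathcal I$ finite. Since Step 3 is also left open, the main route does not establish the tensor-ideal property.

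Your fallback is the route the paper actually takes. \cref{rec-abelem} places each $C^\ast(BG_n;\mathbb F_p)$ in the thick closure of restrictions from elementary abelian subgroups. \cref{rec: structure l.f. Artinian group} leaves finitely many $G$-conjugacy classes $\mathcal E$. \cref{cor-pure=def}, applied to $(\mathrm{Ind}^E_G)_{E\in\mathcal E}$, gives the tensor ideal; no definability detour is needed.

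However, you misplace the difficulty. Conjugation is routine: $c_g$ induces a self-map of $BG$ homotopic to the identity, so $\mathrm{Res}^{gEg^{-1}}_G$ and $\mathrm{Res}^E_G$ have the same essential image. This is the paper's commutative square.

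What needs an argument is the direction of comparison. Set $\mathcal A=\mathbf{pure}\bigl(\mathrm{Thick}(\bigcup_{E\in\mathcal E}\mathrm{Res}^E_G\Mod_{C^\ast(BE;\mathbb F_p)})\bigr)$, which is a tensor ideal. The containment your sketch records, $\mathcal A\subseteq\mathcal P$, is trivial and says nothing about your $\mathcal P$. What you need is $\mathbb 1\in\mathcal A$, since then $\mathcal A=\T$ and hence $\mathcal P=\T$. That means $\prod_n\mathrm{Res}^{G_n}_G C^\ast(BG_n;\mathbb F_p)$ must lie in $\mathrm{Thick}(\bigcup_{E\in\mathcal E}\mathrm{Res}^E_G\Mod_{C^\ast(BE;\mathbb F_p)})$. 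Thick subcategories are not closed under infinite products, so it is not enough that each factor does. You need the number of cones and retracts in \cref{rec-abelem} to be bounded independently of $n$. Only then can the building triangles be multiplied together and regrouped over the finite set $\mathcal E$.

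The paper compresses this into ``the thick closures agree'', but the step is not a formality. Consider $D(\mathbb{Z}_p)$ with the family $(-\otimes\mathbb{Z}/p^j)_j$. Every $\mathbb{Z}/p^j$-module lies in the thick closure of $\mathbb F_p$-modules, and $\mathbb{Z}_p\to\prod_j\mathbb{Z}/p^j$ is a pure monomorphism (\cref{ex-completelocal}). Yet the family kills $\mathbb{Q}_p$, so by \cref{prop:pure-desendable} it is not pure descendable. Building $\mathbb{Z}/p^j$ from $\mathbb F_p$ takes $j$ steps there. A bound of this kind, uniform in $n$, for $C^\ast(BG_n;\mathbb F_p)$ is the ingredient missing from your fallback.
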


\begin{proof}
    By  \cref{prop:pure mono}, we have that 
    \[
    C^\ast(BG;\mathbb F_p) \in \mathbf{pure}^\Delta_\Pi\left( \bigcup_{n\in \mathbb N} \mathrm{Res}^{G_n}_G \Mod_{C^\ast(BG_n;\mathbb F_p)} \right). 
    \]
    It remains to show that the latter is a tensor ideal of $\Mod_{C^\ast(BG;\mathbb F_p)}$. For this we will use the family of elementary abelian subgroups. Let $\mathcal{E}$ be the set of conjugacy classes of elementary abelian subgroups of $G$ which is finite by  \cref{rec: structure l.f. Artinian group}, and $\mathcal{E}_n$ the set of $G_n$-conjugacy classes of elementary abelian subgroups of $G_n$. We choose representatives such that we have a surjection $q\colon \sqcup_n \mathcal{E}_n \twoheadrightarrow \mathcal{E}$ since any elementary abelian subgroup is contained in some $G_n$, $\sqcup_n \mathcal{E}_n=\sqcup_{E\in \mathcal{E}}q^{-1}(E)$. We have a commutative diagram 
    \begin{center}
    \begin{tikzcd}
        \mathrm{Mod}_{C^\ast(BG;\mathbb F_p)} \arrow[rr,"\prod \mathrm{Ind}^{G_n}_G"] \arrow[dd,"\prod \mathrm{Ind}^{E}_{G}"'] & & \prod_{n\in \mathbb N}  \mathrm{Mod}_{C^\ast(BG_n;\mathbb F_p)} \arrow[dd,"\prod \mathrm{Ind}^{V}_{G_n}"']  \\ 
         & & \\
        \prod_{E\in \mathcal{E}}   \mathrm{Mod}_{C^\ast(BE;\mathbb F_p)}\simeq \mathrm{Mod}_{\prod_\mathcal{E} C^\ast(BE;\mathbb F_p)}\arrow[rr," "] & & \prod_n \prod_{V\in \mathcal{E}_n}   \mathrm{Mod}_{C^\ast(BV;\mathbb F_p)} 
    \end{tikzcd}
\end{center}
where the bottom horizontal arrow is induced by conjugations: if $E\in \mathcal{E}$ and $V\in q^{-1}(E)\cap \mathcal{E}_n$, they are conjugated subgroups in $G_n$, then there are equivalences that all assemble into $f\colon C^*(BE;\mathbb F_p)\to \prod_{V\in q^{-1}(E)}C^*(BV;\mathbb F_p)$. 

By  \cref{rec-abelem}, for each $n$, $C^*(BG_n;\mathbb F_p)\in \mathrm{Thick}(\textrm{Res}^E_{G_n}(C^*(BE;\mathbb F_p))\mid E\in \mathcal{E}_n)$,
that is the thick closure of the essential image of $(\mathrm{Res}^{E}_{G_n})_{E\in \mathcal{E}_n}$. We deduce that $\mathrm{Res}^{G_n}_G C^\ast(BG_n;\mathbb F_p)$ is in the thick closure of the essential image of $(\mathrm{Res}^{E}_{G})_{E\in \mathcal{E}_n}$. 

Now, the commutativity of the diagram shows that the thick closure of the essential image of $(\mathrm{Res}^{E}_{G})_{E\in \mathcal{E}}$ agrees with the thick closure of $\bigcup_n \textrm{Res}^{G_n}_G$. Since the set $\mathcal{E}$ is finite, we can invoke  \cref{cor-pure=def} to conclude that the pure-closure $\mathbf{pure}^\Delta_\Pi\left( \bigcup_{n\in \mathbb N} \mathrm{Res}^{G_n}_G \Mod_{C^\ast(BG_n;\mathbb F_p)} \right)$ is a tensor-ideal, as desired.  
\end{proof}

\begin{corollary}\label{coro: Chouinard for discrete p-toral}
   Let $G=\cup_{n\in \mathbb N}G_n$ be locally finite Artinian group, 
   and $\mathcal{E}$ be a set of representatives of $G$-conjugacy classes of elementary abelian $p$-subgroups of $G$. Consider the functor 
    \[
    \mathrm{Ind}_{\mathcal{E}}\colon \Mod_{C^*(BG;\mathbb{F}_p)}\rightarrow \prod_{E\in\mathcal{E}} \Mod_{C^*(BE;\mathbb{F}_p)}
    \]
    induced by the functors $\mathrm{Ind}_G^E\colon \Mod_{C^*(BG;
    \mathbb{F}_p)}\rightarrow \Mod_{C^*(BE;\mathbb{F}_p)}$. Then $ \mathrm{Ind}_{\mathcal{E}}$  is conservative.  
\end{corollary}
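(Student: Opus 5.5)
The plan is to derive the corollary from \cref{thm-example of pure-des} and \cref{prop:pure-desendable}, factoring the induction functors to the $G_n$ through those to the elementary abelian subgroups.

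First, by \cref{rec: structure l.f. Artinian group}, write $G=\bigcup_n G_n$ as an ascending union of finite subgroups. \cref{thm-example of pure-des} says the family $(\mathrm{Ind}_G^{G_n})_{n\in\mathbb N}$ is pure descendable. Then \cref{prop:pure-desendable} shows it is jointly conservative: if $x\in\Mod_{C^*(BG;\mathbb F_p)}$ satisfies $\mathrm{Ind}^{G_n}_G x\simeq 0$ for every $n$, then $x\simeq 0$.

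Second, suppose $\mathrm{Ind}^E_G x\simeq 0$ for all $E\in\mathcal E$, and fix $n$. Each elementary abelian subgroup $V\le G_n$ is $G$-conjugate to a unique $E\in\mathcal E$, say $V=gEg^{-1}$. Conjugation by $g$ induces an equivalence of ring spectra $C^*(BE;\mathbb F_p)\simeq C^*(BV;\mathbb F_p)$ compatible with the maps from $C^*(BG;\mathbb F_p)$, because inner automorphisms of $G$ act trivially on $BG$ up to homotopy. Hence
\[
\mathrm{Ind}^V_{G_n}\mathrm{Ind}^{G_n}_G x\simeq \mathrm{Ind}^V_G x\simeq \mathrm{Ind}^E_G x\simeq 0.
\]
This is exactly the commutativity of the square in the proof of \cref{thm-example of pure-des}. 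Induction is base change of modules, so the first equivalence is just transitivity of base change along $C^*(BG)\to C^*(BG_n)\to C^*(BV)$.

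Third, I need that the family $(\mathrm{Ind}^V_{G_n})_{V}$, with $V$ running over the elementary abelian subgroups of the finite group $G_n$, is jointly conservative on $\Mod_{C^*(BG_n;\mathbb F_p)}$. This is the finite-group case from \cite{BG}, listed as example (2) in the introduction, and it also follows from \cref{rec-abelem}. Concretely, \cref{rec-abelem} gives
\[
C^*(BG_n)\in\mathrm{Thick}\bigl(\mathrm{Res}^V_{G_n}C^*(BV)\bigr)_{V}.
\]
For $y=\mathrm{Ind}^{G_n}_G x$, the projection formula gives $y\otimes \mathrm{Res}^V_{G_n}C^*(BV)\simeq \mathrm{Res}^V_{G_n}\mathrm{Ind}^V_{G_n} y=0$. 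Since $y\otimes-$ is exact, $y\simeq y\otimes C^*(BG_n)$ lies in the thick closure of zero objects, so $y\simeq 0$. This holds for every $n$, so the first step gives $x\simeq 0$. Because $\mathrm{Ind}_{\mathcal E}$ is exact, having zero kernel is the same as being conservative.

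The main obstacle I expect is the second step: checking that $\mathrm{Ind}^V_G$ and $\mathrm{Ind}^E_G$ agree for $G$-conjugate (but possibly not $G_n$-conjugate) subgroups. This needs the conjugation equivalence to be one of $C^*(BG)$-algebras, up to homotopy is enough. Only vanishing of objects is at stake, so an equivalence of underlying module categories compatible with base change suffices.
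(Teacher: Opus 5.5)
Your proposal is correct and follows the paper's route. The paper's proof is the one-line combination of \cref{thm-example of pure-des} (through \cref{prop:pure-desendable}) with Chouinard's theorem for the finite groups $G_n$ from \cite{BG}; you additionally spell out the transitivity of induction, the conjugation invariance $\mathrm{Ind}^V_G x\simeq \mathrm{Ind}^E_G x$ for $G$-conjugate $V$ and $E$, and how finite-group Chouinard follows from \cref{rec-abelem} via the projection formula, all of which the paper leaves implicit.
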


\begin{proof}
 This follows by Chouinard's theorem for finite groups (see \cite[Theorem 3.1]{BG}) combined with  \cref{thm-example of pure-des}. 
\end{proof}

\begin{remark}
      Locally finite Artinian $p$-groups (discrete $p$-toral groups, see  \cref{rec: structure l.f. Artinian group}) are essential pieces in the theory of $p$-local compact groups as discrete models for classifying spaces of compact Lie groups and $p$-compact groups developed by Broto, Levi and Oliver \cite{BLO07}.  

      In \cite[Theorem 4.25]{BCHV} the authors prove a Chouinard's theorem for $p$-local compact groups with respect to the family of elementary abelian subgroups under the assumption that the Sylow $p$-subgroup $S$ is geometric; that is, $S$ is a discrete approximation at $p$ of a $p$-toral group. \cref{coro: Chouinard for discrete p-toral} allows one to establish this result with no restriction on the Sylow $p$-subgroup. As a consequence, descent techniques apply and stratification results for the category of modules over cochains on classifying spaces of $p$-local compact groups hold in full generality.
\end{remark}

%------------ bibliography

\bibliographystyle{alpha}
\bibliography{bibfile} 

\end{document}